\newtheorem{theorem}{Theorem}
\newtheorem{corollary}[theorem]{Corollary}
\newtheorem{proposition}[theorem]{Proposition}
\theoremstyle{definition}
\theoremstyle{remark}
\numberwithin{equation}{section}
\newcommand{\D}{\mathbb{D}}
\newcommand{\DD}{\widehat{\mathcal{D}}}
\newcommand{\Dd}{\widecheck{\mathcal{D}}}
\newcommand{\DDD}{\mathcal{D}}
\newcommand{\N}{\mathbb{N}}
\newcommand{\C}{\mathbb{C}}
\newcommand{\Oh}{\mathcal{O}}
\renewcommand{\phi}{\varphi}
\DeclareMathOperator{\Real}{Re}
\newcommand{\T}{\mathbb{T}}
       \def\b{\beta}        \def\g{\gamma}
     \def\om{\omega}      
       \def\t{\theta}       
                  \def\z{\zeta}
                  \def\vp{\varphi}
\def\G{\Gamma}
\def\omg{\widehat{\omega}}
\renewcommand{\H}{\mathcal{H}}
\newenvironment{Prf}{\noindent{\emph{Proof of}}}
{\hfill$\Box$ }
\begin{document}

\title[Harmonic conjugates on Bergman spaces induced by doubling weights]{Harmonic conjugates on Bergman spaces induced by doubling weights}

\keywords{Harmonic conjugate, Bergman space, doubling weights}

\author{Jos\'e \'Angel Pel\'aez}
\address{Departamento de An\'alisis Matem\'atico, Universidad de M\'alaga, Campus de
Teatinos, 29071 M\'alaga, Spain. Phone number: 0034952131911} \email{japelaez@uma.es} 

\author{Jouni R\"atty\"a}
\address{University of Eastern Finland, P.O.Box 111, 80101 Joensuu, Finland}
\email{jouni.rattya@uef.fi}

\thanks{This research was supported in part by Ministerio de Ciencia Innovación y universidades, Spain, projects
PGC2018-096166-B-100 and MTM2017-90584-REDT; La Junta de Andaluc{\'i}a,
project FQM210; 
Academy of Finland 286877.}

\begin{abstract}
A radial weight $\omega$ belongs to the class $\widehat{\mathcal{D}}$ if there exists $C=C(\omega)\ge 1$ such that
 $\int_r^1 \omega(s)\,ds\le C\int_{\frac{1+r}{2}}^1\omega(s)\,ds$ for all $0\le r<1$. Write $\om\in\widecheck{\mathcal{D}}$ if there exist constants $K=K(\omega)>1$ and $C=C(\omega)>1$ such that $\widehat{\omega}(r)\ge C\widehat{\omega}\left(1-\frac{1-r}{K}\right)$ for all $0\le r<1$. These classes of radial weights arise naturally in the operator theory of Bergman spaces induced by radial weights~\cite{PelaezRattya2019}. 

Classical results by Hardy and Littlewood \cite{HLCrelle32}, and Shields and Williams \cite{ShiWiMich}
show that the weighted Bergman space of harmonic functions is not closed by harmonic conjugation if $\omega\in\widehat{\mathcal{D}}\setminus \widecheck{\mathcal{D}}$
 and $0<p\le 1$. In this paper we establish sharp estimates for the norm of the analytic Bergman space $A^p_\omega$, with $\omega\in\widehat{\mathcal{D}}\setminus \widecheck{\mathcal{D}}$ and $0<p<\infty$, in terms of quantities depending on the real part of the function. It is also shown that these quantities result equivalent norms for certain classes of radial weights.
\end{abstract}

\maketitle

\section{Introduction and main results}

Let $\H(\D)$ and $h(\D)$ denote the spaces of analytic and harmonic functions in the unit disc $\D=\{z\in\C:|z|<1\}$, respectively.
For $0<p\le\infty$, the Hardy space $H^p$ consists of $f\in\H(\D)$ for which
    \begin{equation*}\label{normi}
    \|f\|_{H^p}=\sup_{0<r<1}M_p(r,f)<\infty
    \end{equation*}
where
    $$
    M_p(r,f)=\left (\frac{1}{2\pi}\int_0^{2\pi}
    |f(re^{i\theta})|^p\,d\theta\right )^{\frac{1}{p}},\quad 0<p<\infty,
    $$
and 
    $$
    M_\infty(r,f)=\max_{0\le\theta\le2\pi}|f(re^{i\theta})|.
    $$
The Hardy space $h^p$ of harmonic functions is defined in an analogously manner. For a nonnegative function $\om\in L^1([0,1))$, the extension to $\D$, defined by $\om(z)=\om(|z|)$ for all $z\in\D$, is called a radial weight. For $0<p<\infty$ and such an $\omega$, the Lebesgue space $L^p_\om$ consists of complex-valued measurable functions $f$ on $\D$ such that
    $$
    \|f\|_{L^p_\omega}^p=\int_\D|f(z)|^p\omega(z)\,dA(z)<\infty,
    $$
where $dA(z)=\frac{dx\,dy}{\pi}$ is the normalized area measure on $\D$. The corresponding weighted Bergman spaces are $A^p_\om=L^p_\omega\cap\H(\D)$ and $a^p_\om=L^p_\omega\cap h(\D)$. Throughout this paper we assume $\widehat{\om}(z)=\int_{|z|}^1\om(s)\,ds>0$ for all $z\in\D$, for otherwise $A^p_\om=\H(\D)$ and $a^p_\om=h(\D)$.

A radial weight $\om$ belongs to the class~$\DD$ if the tail integral $\widehat{\om}$ satisfies the doubling property $\widehat{\om}(r)\le C\widehat{\om}(\frac{1+r}{2})$ for some constant $C=C(\om)\ge1$ and for all $0\le r<1$. Write $\om\in\Dd$ if there exist constants $K=K(\om)>1$ and $C=C(\om)>1$ such that $\widehat{\om}(r)\ge C\widehat{\om}\left(1-\frac{1-r}{K}\right)$ for all $0\le r<1$, and set $\DDD=\DD\cap\Dd$. These classes of radial weights arise naturally in the operator theory of weighted Bergman spaces~\cite{PelaezRattya2019}. For instance, the class $\DDD$ describes the radial weights such that the Littlewood-Paley formula 
  \begin{equation}\label{eq:LP1}
	\|f\|^p_{A^p_\om}\asymp |f(0)|^p+\int_{\D}|f'(z)|^p(1-|z|)^p\om(z)\,dA(z),\quad f\in\H(\D),
	\end{equation}
holds~\cite[Theorem~5]{PelaezRattya2019}, and there exists a constant $C=C(\om,p)>0$ such that
	\begin{equation}\label{eq:LP2}
  |f(0)|^p+\int_{\D}|f'(z)|^p(1-|z|)^p\om(z)\,dA(z)\le C\|f\|^p_{A^p_\om},\quad f\in\H(\D),
	\end{equation} 
if and only if $\om\in\DD$ \cite[Theorem~6]{PelaezRattya2019}. These results, together with \cite[Theorems~1 and 3]{PelaezRattya2019} related to bounded Bergman projections on $L^\infty$, show that weights in $\DD\setminus\Dd$ induce in a sense essentially smaller Bergman spaces than the standard radial weights $(1-|z|^2)^\alpha$ with $-1<\alpha<\infty$.  

A classical problem on a space $X$ of harmonic functions in $\D$ consists of studying which properties do a function $u\in X$ and its harmonic conjugate share. In particular, it is of interest to determine whether or not $X$ is closed by conjugation. One of the most celebrated and useful results on this direction is the M.~Riesz theorem which states that there exists a constant $C=C(p)>0$ such that
	\begin{equation}\label{pillu}
	\|f\|_{H^p}\le C\left(|f(0)|+\|\Real f\|_{h^p}\right),\quad f\in\H(\D),
	\end{equation}
whenever $1<p<\infty$~\cite{Duren,Garnett1981}. Despite the fact that this property does not carry over to the range $0<p\le1$, Burkholder, Gundy and Silverstein~\cite{BGS}, see also \cite{Garnett1981}, showed that the equivalence
	\begin{equation}\label{hpmaximal}
	\|f\|_{H^p}\asymp \| (\Real f)^\star\|_{L^p(\T)},\quad f\in\H(\D),
	\end{equation}
is valid for each $0<p<\infty$. Here, and throughout the paper, $u^\star(e^{i\theta})=\sup_{z\in\G(e^{i\theta})}|u(z)|$ and 
	\begin{equation}\label{eq:gammadeu}
  \Gamma(u)
	=\left\{z\in\D:\,|\t-\arg z|<\frac12\left(1-\frac{|z|}{r}\right)\right\},\quad u=re^{i\theta}\in\overline{\D}\setminus\{0\}.
  \end{equation}
As for the Bergman spaces induced by radial weights, by defining 
	$$
	N(u)(\z)=\sup_{z\in\G(\z)}|u(z)|,\quad \z\in\D\setminus\{0\},
  $$ 
and applying \eqref{hpmaximal} to the dilatation $u_r(z)=u(rz)$ with $0\le r<1$ as in the proof of \cite[Lemma~4.4]{PelRat}, one obtains the following result.

\begin{proposition}\label{pr:maxtang}
Let $0<p<\infty$ and let $\om$ be a radial weight. Then there exists a constant $C=C(p)>0$ such that 
	\begin{equation}\label{eq:maxtang}
	\|f\|_{A^p_\om}\le\|N(\Real f)\|_{L^p_\om}\le C\|f\|_{A^p_\om},\quad f\in\H(\D).
	\end{equation}
\end{proposition}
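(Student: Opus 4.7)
The plan is to apply the Burkholder–Gundy–Silverstein equivalence \eqref{hpmaximal} circle by circle via a dilation argument, and then integrate against the weight. This mirrors the strategy used in the proof of \cite[Lemma~4.4]{PelRat} indicated in the statement.

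First I would fix $r\in(0,1)$ and consider the dilation $f_r(z)=f(rz)$. Since $f_r$ extends analytically across $\overline{\D}$ it lies in $H^p$, with $\|f_r\|_{H^p}^p=M_p(r,f)^p$. Applying \eqref{hpmaximal} to $f_r$ then gives
$$
M_p(r,f)^p\asymp\frac{1}{2\pi}\int_0^{2\pi}(\Real f_r)^\star(e^{i\theta})^p\,d\theta,
$$
with constants depending only on $p$, uniformly in $r$.

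The key geometric step is a change of variables in the cone. Writing $w=rz$, the defining condition $|\theta-\arg z|<\frac12(1-|z|)$ of $\Gamma(e^{i\theta})$ becomes $|\theta-\arg w|<\frac12(1-|w|/r)$, which is exactly the condition $w\in\Gamma(re^{i\theta})$ from \eqref{eq:gammadeu}. Since $\Gamma(re^{i\theta})\subset\{|w|<r\}\subset\D$ no extra restriction on $w$ is needed, so
$$
(\Real f_r)^\star(e^{i\theta})=\sup_{z\in\Gamma(e^{i\theta})}|\Real f(rz)|=\sup_{w\in\Gamma(re^{i\theta})}|\Real f(w)|=N(\Real f)(re^{i\theta}).
$$
Combining with the previous display, one obtains $M_p(r,f)^p\asymp\frac{1}{2\pi}\int_0^{2\pi}N(\Real f)(re^{i\theta})^p\,d\theta$ for every $r\in(0,1)$.

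Finally, I would multiply this equivalence by $\omega(r)\,2r\,dr$ and integrate over $r\in(0,1)$. Polar coordinates turn the left-hand side into $\|f\|_{A^p_\omega}^p$ and the right-hand side into $\|N(\Real f)\|_{L^p_\omega}^p$, which yields the desired two-sided inequality with a constant depending only on $p$. The essentially only subtle point in the argument is the identification of the dilated cone with $\Gamma(re^{i\theta})$; once that is in hand, everything reduces to Fubini and the standard polar representation of the Bergman norm, and the radial weight $\omega$ plays no active role beyond being radial.
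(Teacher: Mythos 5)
Your argument is correct and is precisely the proof the paper has in mind: dilate, apply the Burkholder--Gundy--Silverstein equivalence \eqref{hpmaximal} circle by circle, identify the dilated cone $r\Gamma(e^{i\theta})$ with $\Gamma(re^{i\theta})$ via \eqref{eq:gammadeu}, and integrate in polar coordinates against the radial weight. The only (harmless) discrepancy is that this yields the left-hand inequality in \eqref{eq:maxtang} with a constant depending on $p$ rather than with constant $1$ as displayed, but that is exactly what the paper's own one-line derivation from \eqref{hpmaximal} gives as well.
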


For $1<p<\infty$ and a radial weight $\om$, the inequality 
	\begin{equation}\label{eq:conjapw}
	\|f\|_{A^p_\om}\le C\left( |f(0)|+\| \Real f\|_{a^p_\om} \right),\quad f\in\H(\D),
	\end{equation}
with $C=C(p)>0$ follows from \eqref{pillu}. For $0<p\le1$ we have \eqref{eq:conjapw} for some constant $C=C(\om,p)>0$ if, roughly speaking, $\om$ is sufficiently smooth and $A^p_{\om}$ is large enough \cite[Theorems~5.1 and~7.1]{PavP}.
However, if $A^p_{\om}$ is sufficiently small, then \eqref{eq:conjapw} is no longer true in general for $0<p\le1$. In fact, Hardy and Littlewood~\cite{HLCrelle32}, see also \cite[p.~68]{Duren}, proved that if $0<p\le1$ then each $f\in\H(\D)$ such that $\Real f\in h^p$ satisfies $M_p(r,f)=\Oh\left(\log\frac{e}{1-r}\right)^{\frac1p}$. This implication is sharp in the sense that for $p=\frac{1}{k}$ with $k\in\N$ the function $f(z)=\exp\left(\frac{i(k-1)\pi}{2}\right)(1-z)^{-k}$ satisfies $\Real f\in h^{p}$ and $M_p(r,f)\asymp\left(\log\frac{e}{1-r}\right)^{\frac1p}$ for all $0<r<1$. Consequently, if $\int_0^1\om(s)\log\frac{e}{1-s}\,ds=\infty$, the inequality \eqref{eq:conjapw} fails for $0<p\le1$. Observe that, by Fubini's theorem, $\int_0^1 \om(s)\log\frac{e}{1-s}\,ds$ converges if and only if the positive function~$\widetilde{\om}$, defined by $\widetilde{\om}(r)=\frac{\widehat{\om}(r)}{1-r}$ for all $0\le r<1$, is a radial weight. Unfortunately, $\widetilde{\om}$ being a weight does not guarantee \eqref{eq:conjapw} for $0<p\le1$ if $\om\in\DD\setminus\Dd$. This can be deduced by applying a result of Shields and Williams \cite[Theorem~1']{ShiWiMich}, see the beginning of Section~\ref{Sec:2} for details.  

The main objective of this paper is to search for sharp inequalities of the same type as \eqref{eq:conjapw} but where $\|\Real f\|_{a^p_\om}$on the right has been replaced by only a slightly larger quantity. Of course, the primary interest lies in the case in which $0<p\le1$ and $\om$ induces a relatively small Bergman space, although all the obtained inequalities are valid on the full range $0<p<\infty$. The motivation for the first result comes from Proposition~\ref{pr:maxtang}. In view of \eqref{eq:maxtang} and the obvious inequality 
	$$
	\sup_{0<s< r}M_p(s,\Real f)\le M_p(r,N(\Real f))
	$$ 
it is natural to ask whether or not $M_p(r,\Real f)$ can be replaced by $\sup_{0<s<r}M_p(s,\Real f)$ in \eqref{eq:conjapw}. However, the results in \cite{HLCrelle32} and \cite[Theorem~1']{ShiWiMich} imply at once that this is not the case, and therefore a larger quantity than $\int_0^1\sup_{0<s<r}M^p_p(s,\Real f)\om(r)\,dr$ should be the replacement of $\|\Real f\|^p_{a^p_\om}$ in \eqref{eq:conjapw}. Our first result gives a natural substitute.

\begin{theorem}\label{th:maximalmedias}
Let $0<p<\infty$ and let $\om$ be a radial weight such that $\widetilde{\om}$ is a weight. Then 
	\begin{equation}\label{maxtildeintro}
	\|f\|^p_{A^p_\om}\lesssim\int_0^1\sup_{0<s<r}M^p_p(s,\Real f)\widetilde{\om}(r)\,dr,\quad f\in\H(\D),
	\end{equation}
if and only if $\widetilde{\om}\in\DD$.
\end{theorem}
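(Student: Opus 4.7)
The plan is to prove the two directions of the equivalence separately, with sufficiency being the more explicit part.

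\emph{Sufficiency.} Assume $\widetilde\om\in\DD$. Apply the classical Hardy-Littlewood conjugate-function theorem (M.~Riesz for $p>1$, the sharp version cited in the introduction for $0<p\le 1$) to the dilation $f_\rho(z)=f(\rho z)\in H^p$ with $\rho=(1+r)/2$. Because $\|\Real f_\rho\|_{h^p}^p=\sup_{0<s<\rho}M_p^p(s,\Real f)$ and $1-\frac{2r}{1+r}\asymp 1-r$, this yields the pointwise estimate
\[
M_p^p(r,f)\lesssim\left(|f(0)|^p+\Phi\bigl(\tfrac{1+r}{2}\bigr)\right)\log\tfrac{e}{1-r},\qquad \Phi(r):=\sup_{0<s<r}M_p^p(s,\Real f),
\]
valid for $0\le r<1$. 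Multiplying by $2r\om(r)$ and integrating, with Fubini giving $\int_0^1\log(e/(1-r))\om(r)\,dr\asymp\widehat{\widetilde\om}(0)$ to handle the $|f(0)|^p$ term (and the harmless normalization $\Imag f(0)=0$ for which the right-hand side of \eqref{maxtildeintro} is unchanged), the sufficiency reduces to
\[
\int_0^1\Phi\bigl(\tfrac{1+r}{2}\bigr)\log\tfrac{e}{1-r}\om(r)\,dr\lesssim\int_0^1\Phi(r)\widetilde\om(r)\,dr.
\]
A layer-cake decomposition of the nondecreasing function $\Phi$ together with Fubini reduces this to the pointwise estimate $\int_{2R-1}^1\log(e/(1-s))\om(s)\,ds\lesssim\widehat{\widetilde\om}(R)$ for $R\in[1/2,1)$. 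An integration by parts rewrites the left-hand side as $\widehat\om(2R-1)\log(e/(2(1-R)))+\widehat{\widetilde\om}(2R-1)$, and a second integration by parts yields the elementary bound $\widehat\om(r)\log(e/(1-r))\lesssim\widehat{\widetilde\om}(r)$ valid for any weight; combining this with the hypothesis $\widetilde\om\in\DD$ in the form $\widehat{\widetilde\om}(2R-1)\lesssim\widehat{\widetilde\om}(R)$ closes the argument.

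\emph{Necessity.} Assume \eqref{maxtildeintro} and suppose, towards a contradiction, that $\widetilde\om\notin\DD$. The negation of doubling provides $r_n\to 1^-$ with $\widehat{\widetilde\om}(r_n)/\widehat{\widetilde\om}((1+r_n)/2)\to\infty$. The plan is to test \eqref{maxtildeintro} against a family $f_n\in\H(\D)$ of Hardy-Littlewood-type extremals localized at scale $1-r_n$. The prototype is the classical extremal $g_k(z)=\exp(i(k-1)\pi/2)(1-z)^{-k}$ for $p=1/k$, $k\in\N$ (with a suitable variant for general $p\in(0,\infty)$); a genuine localization at the relevant scale, obtained by a truncation of the Taylor expansion or multiplication by a high-frequency factor $z^{N_n}$, produces $f_n$ whose real part has $\sup_{0<s<r}M_p^p(s,\Real f_n)$ bounded uniformly for $r<(1+r_n)/2$ while $M_p^p(r,f_n)$ saturates the Hardy-Littlewood $\log$-factor for $r\ge(1+r_n)/2$. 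Substitution in \eqref{maxtildeintro} then yields $\widehat{\widetilde\om}(r_n)\lesssim\widehat{\widetilde\om}((1+r_n)/2)$, contradicting the choice of $r_n$.

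\emph{Main obstacle.} The delicate step is the construction of the test family for necessity: plain translates $g_k(b\,\cdot)$ make both sides of \eqref{maxtildeintro} comparable to $\widehat{\widetilde\om}(0)$ and therefore do not expose any doubling scale, so one needs a genuine localization, and verifying that $\sup_{0<s<r}M_p(s,\Real f_n)$ exhibits the required sharp cutoff at $r=(1+r_n)/2$ is the technical heart of the argument. Sufficiency, by contrast, is a reasonably mechanical combination of the dilated Hardy-Littlewood estimate with two integrations by parts and the doubling hypothesis on $\widetilde\om$.
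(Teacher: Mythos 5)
Your sufficiency argument contains a fatal error. The ``elementary bound'' $\widehat{\om}(r)\log\frac{e}{1-r}\lesssim\widehat{\widetilde{\om}}(r)$ is false: for the standard weights $\om(s)=(1-s)^\a$, $\a>-1$, one has $\widehat{\widetilde{\om}}(r)=\widehat{\om}(r)/(\a+1)$, so your inequality would force $\log\frac{e}{1-r}\lesssim1$. Worse, the target of your (correct) layer-cake reduction, namely $\int_{2R-1}^1\log\frac{e}{1-s}\,\om(s)\,ds\lesssim\widehat{\widetilde{\om}}(R)$, is itself false for these weights: the left-hand side is $\asymp(1-R)^{\a+1}\log\frac{e}{1-R}$ while the right-hand side is $\asymp(1-R)^{\a+1}$. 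Since the layer-cake reduction is essentially an equivalence (test with $\Phi=\chi_{[R,1)}$), the intermediate inequality $\int_0^1\Phi(\frac{1+r}{2})\log\frac{e}{1-r}\,\om(r)\,dr\lesssim\int_0^1\Phi(r)\widetilde{\om}(r)\,dr$ that your scheme requires simply does not hold, even though the theorem is certainly true for standard weights. The root cause is that the dilated Hardy--Littlewood estimate $M_p^p(r,f)\lesssim(|f(0)|^p+\Phi(\frac{1+r}{2}))\log\frac{e}{1-r}$ inserts the worst-case logarithm at \emph{every} radius; it is sharp only for functions whose growth transitions precisely at scale $1-r$, and integrating these worst cases in $r$ against $\om$ overshoots by a factor $\log\frac{e}{1-r}$. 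The paper avoids this loss by writing $M_p^p(r,f)\lesssim\int_0^1M_p^p(rt,f')(1-t)^{p-1}\,dt$ via Luecking's embedding $D^p_{p-1}\subset H^p$ and then using $M_p(\r,f')\lesssim(\s-\r)^{-1}\sup_{0<t<\s}M_p(t,\Real f)$, so that the scale $\frac{1+rt}{2}$ at which $\Real f$ contributes is retained inside the $t$-integral and Fubini is applied only afterwards; pulling $\Phi(\frac{1+rt}{2})\le\Phi(\frac{1+r}{2})$ out of that integral is exactly the step that recreates your lossy bound and must be avoided.

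The necessity direction is also incomplete: the localized extremals are the whole content of your argument and you explicitly defer their construction, and it is not clear that the substitution would yield $\widehat{\widetilde{\om}}(r_n)\lesssim\widehat{\widetilde{\om}}(\frac{1+r_n}{2})$ rather than an estimate involving $\widehat{\om}(r_n)$, which would not directly contradict the failure of doubling. This machinery is unnecessary. Since $\sup_{0<s<r}M_p^p(s,\Real f)\le M_p^p(r,f)$, the right-hand side of \eqref{maxtildeintro} is dominated by a constant times $\|f\|^p_{A^p_{\widetilde{\om}}}$, so \eqref{maxtildeintro} implies $\|f\|_{A^p_\om}\lesssim\|f\|_{A^p_{\widetilde{\om}}}$; testing this with the monomials $z^n$ gives $\om_{np+1}\lesssim\widetilde{\om}_{np+1}$, and an integration by parts turns this into $x(\widetilde{\om}_{[1]})_x\lesssim\widetilde{\om}_x$, which is precisely the moment characterization of $\widetilde{\om}\in\DD$. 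I would encourage you to redo sufficiency along the paper's lines; the dilation-plus-pointwise-estimate strategy cannot be repaired.
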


If $\om\in\DD$ such that also $\widetilde{\om}$ is a weight, then a straightforward calculation shows that
$\widetilde{\om}\in\DD$. The converse implication is false in general as the following result, the proof of which is given in Section~\ref{sec:3}, shows. 

\begin{theorem}\label{th:count1}
There exists a radial weight $\om\not\in\DD$ such that $\widetilde{\om}\in\DD$.
\end{theorem}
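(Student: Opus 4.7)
The plan is to prescribe $\omega$ through its tail $\widehat\omega$ on the dyadic scale $s_n = 1 - 2^{-n}$, noting that $\frac{1+s_n}{2} = s_{n+1}$. I would fix a strictly decreasing positive sequence $(a_n)_{n\ge 1}$ with $a_n \to 0$, set $\widehat\omega(r) = a_1$ for $r \in [0,s_1]$, and extend $\widehat\omega$ to $[s_1,1)$ by linear interpolation through the points $(s_n, a_n)$. Then $\omega = -\widehat\omega'$ is a nonnegative piecewise constant function, hence a legitimate radial weight.

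The first step is to translate the two target conditions into conditions on $(a_n)$. Evaluating the $\widehat{\mathcal D}$-inequality at $r = s_n$ (where $\frac{1+r}{2} = s_{n+1}$) shows
\[
\omega \notin \widehat{\mathcal D} \iff \sup_n \frac{a_n}{a_{n+1}} = \infty.
\]
A direct estimate using $1-s \asymp 2^{-n}$ on $[s_n,s_{n+1}]$ yields $\tfrac12 T_{n+1} \le \widehat{\widetilde\omega}(s_n) \le T_n$, where $T_n := \sum_{k\ge n} a_k$. Consequently $\widetilde\omega$ is a weight iff $\sum_n a_n < \infty$, and the doubling condition for $\widetilde\omega$, checked again at $r = s_n$, reduces to $a_n \lesssim T_{n+1}$ for all $n$.

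The second step is to exhibit a sequence $(a_n)$ satisfying all three requirements: summability, $a_n \lesssim T_{n+1}$, and $\sup_n a_n/a_{n+1} = \infty$. I would use a block structure: partition $\mathbb N$ into blocks $[n_j, n_{j+1})$ with $n_j = 2^j$ (so the $j$-th block has length $2^j$), and set $a_n = b_j$ for $n \in [n_j, n_{j+1})$, with $(b_j)$ strictly decreasing. Then $a_n/a_{n+1} = 1$ except at $n = n_{j+1}-1$, where it equals $b_j/b_{j+1}$, so the third requirement becomes $b_j/b_{j+1}\to\infty$. Taking $b_j = 2^{-j(j-1)/2}$ gives $b_j/b_{j+1} = 2^j$, while the following block contributes $(n_{j+2}-n_{j+1})b_{j+1} = 2^{j+1}b_{j+1} = 2b_j$ to the tail sum, so $T_{n_{j+1}} \ge 2b_j$ and the critical inequality $a_{n_{j+1}-1} = b_j \le \tfrac12 T_{n_{j+1}}$ is satisfied. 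Summability $\sum a_n = \sum_j 2^j b_j = \sum_j 2^{j(3-j)/2} < \infty$ is immediate.

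The main obstacle is precisely the tension between the third requirement (an unbounded drop ratio $b_j/b_{j+1}$) and the second ($a_n \lesssim T_{n+1}$): any sharp drop at the end of a block must be absorbed by the accumulated mass of the next block, which forces the next block length $n_{j+2}-n_{j+1}$ to be at least of order $b_j/b_{j+1}$. The choice $n_j = 2^j$ sits exactly on this critical scale, and any sufficiently rapidly increasing block pattern with matched drop ratios would work equally well.
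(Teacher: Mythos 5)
Your construction is correct, and it verifies the two properties by a more self-contained route than the paper. The paper also builds a step-type weight whose tail $\widehat{\om}$ has occasional abrupt drops, but on the scale $t_n=1-e^{-n}$ with $\om(s)=\sum_n\chi_{[s_n,t_{n+1}]}(s)/(1-s)$, and instead of checking the doubling of $\widetilde{\om}$ directly it invokes the equivalence $\widetilde{\om}\in\DD\Leftrightarrow\widehat{\om}\lesssim\widehat{\widetilde{\om}}$ from Theorem~\ref{th:1} and computes $\widehat{\widetilde{\om}}$ exactly; the resulting requirements on the increments $\vp(n)$ are precisely the analogues of your condition $a_n\lesssim T_{n+1}$. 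Your route, prescribing a piecewise-linear tail on the dyadic scale and using the two-sided bound $\frac12 T_{n+1}\le\widehat{\widetilde{\om}}(s_n)\le T_n$ together with the block-constant sequence $b_j=2^{-j(j-1)/2}$ on blocks of length $2^j$, is cleaner and avoids any appeal to Theorem~\ref{th:1}; what the paper's exact computation buys in exchange is precise size information on $\widehat{\om}$ and $\widehat{\widetilde{\om}}$ for its example, which underlies the growth estimate $(\log\frac1{1-r})^{-2}\lesssim\widehat{\om}(r)$ quoted in the introduction. Three small points to tidy up in a final write-up: your sequence is only non-increasing, not strictly decreasing as announced (harmless, since $\om=-\widehat{\om}'\ge0$ and $\widehat{\om}>0$ are all that is needed); the interior-of-block case $a_n=a_{n+1}\le T_{n+1}$ deserves one line; and the reduction of both $\DD$-conditions to checks at $r=s_n$ should be justified by a sentence using the monotonicity of $\widehat{\om}$ and $\widehat{\widetilde{\om}}$ and $\frac{1+s_n}{2}=s_{n+1}$ -- note that only the easy implications of your two stated equivalences are actually needed.
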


The proofs of Theorems~\ref{th:maximalmedias} and \ref{th:count1} use the fact that $\widetilde{\om}\in\DD$ if and only if 
$\widetilde{\om}$ is a weight and there exists a constant $C=C(\om)>0$ such that
	\begin{equation}\label{eq:i1}
	\widehat{\om}(r)\le C\widehat{\widetilde{\om}}(r),\quad0\le r<1.
	\end{equation} 
An integration by parts shows that \eqref{eq:i1} yields
	$$
	\int_0^1 \sup_{0<s<r}M^p_p(s,\Real f)\om(r)\,dr
	\le C\int_0^1 \sup_{0<s<r}M^p_p(s,\Real f)\widetilde{\om}(r)\,dr,\quad f\in\H(\D),
	$$
as expected. Therefore Theorem~\ref{th:maximalmedias} shows that an appropriate substitute for $\|\Real f\|_{a^p_\om}$ in \eqref{eq:conjapw} on the range $0<p\le1$ is the integral on the right hand side of the inequality above.

The proof of Theorem~\ref{th:maximalmedias} is given in Section~\ref{Sec:2}, and it goes roughly speaking as follows. We first observe that \eqref{maxtildeintro} trivially implies $\|f\|_{A^p_\om}\lesssim\|f\|_{A^p_{\widetilde{\om}}}$ for all $f\in\H(\D)$. By testing this with monomials and using a description of $\DD$ in terms of the moments $\om_x=\int_0^1r^x\om(r)\,dr$ of the weight, we deduce $\widetilde\om\in\DD$, which is equivalent to \eqref{eq:i1}. The true work lies in obtaining \eqref{maxtildeintro} on the range $0<p\le1$ from \eqref{eq:i1}. This is achieved by first estimating $\|f\|_{A^p_\om}$ upwards by using the embedding $D^p_{p-1}\subset H^p$ \cite{LUPAMS88} between Dirichlet-type and Hardy spaces, valid for $0<p\le2$, and then passing from the derivative to the real part by using \cite[Lemma~2.2]{Pavcheck}, which states that there exists a constant $C=C(p)>0$ such that $M_p(r,f')\le C(\rho-r)^{-1}\sup_{0<t<\rho}M_p(t,\Real f)$ for all $0\le r<\rho<1$.

Before presenting our next result, we mention an interesting consequence of Theorem~\ref{th:count1} concerning the class $\DD$. Namely, $\DD$ is not closed by multiplication by $(1-|z|)$. Indeed, if $\om$ is the weight constructed in Theorem~\ref{th:count1}, then the weight $\om_{[\beta]}$, defined by $\om_{[\beta]}(z)=\om(z)(1-|z)^\beta$ for all $z\in\D$, does not belong to $\DD$ for any $\beta>0$, see Proposition~\ref{pr:count1} below. These results show that the class $\DD$ of radial weights is much more complex than it seems at first glance. See
\cite{PelSum14,PelaezRattya2019} for an extensive study of $\DD$. 

By our next result, the quantity $\sup_{0<s<r}M^p_p(s,\Real f)$ can be replaced by $M^p_p(r,\Real f)$ in Theorem~\ref{th:maximalmedias} if we pose a stronger hypothesis $\om\in\DD$ instead of $\widetilde\om\in\DD$. 

\begin{theorem}\label{th:W2nuevo}
Let $0<p<\infty$ and $\om\in\DD$. Then there exists a constant $C=C(p,\om)>0$ such that $\|f\|_{A^p_\om}\le C\|\Real f\|_{L^p_{\widetilde{\om}}}$ for all $f\in\H(\D)$.
\end{theorem}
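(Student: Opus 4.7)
The plan is to deduce the theorem from Theorem~\ref{th:maximalmedias} by dispensing with the running supremum on its right-hand side. Assume $\widetilde{\om}$ is a weight (otherwise the right-hand side is typically infinite and the claim is trivial). Then the remark after Theorem~\ref{th:maximalmedias} gives $\widetilde{\om}\in\DD$, so Theorem~\ref{th:maximalmedias} yields
\[
\|f\|_{A^p_\om}^p \lesssim \int_0^1 \sup_{0<s<r} M_p^p(s,\Real f)\,\widetilde{\om}(r)\,dr.
\]
If $1\le p<\infty$, the convexity of $x\mapsto |x|^p$ makes $|\Real f|^p$ subharmonic, so $r\mapsto M_p^p(r,\Real f)$ is nondecreasing; the supremum coincides with $M_p^p(r,\Real f)$, and the theorem is immediate.

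For $0<p<1$ the function $M_p^p(\cdot,\Real f)$ need not be monotone, so the previous inequality does not close by itself. My plan is to reproduce the proof strategy of Theorem~\ref{th:maximalmedias} while exploiting the stronger hypothesis $\om\in\DD$ (rather than only $\widetilde{\om}\in\DD$). Applying the embedding $D^p_{p-1}\subset H^p$ (valid for $0<p\le 2$) to the dilation $f_r$ produces $M_p^p(r,f)\lesssim |f(0)|^p+\int_0^r M_p^p(u,f')(r-u)^{p-1}\,du$; then Lemma~2.2 of \cite{Pavcheck} with $\rho=(1+u)/2$, followed by integration against $\om(r)\,dr$ and Fubini, gives
\[
\|f\|_{A^p_\om}^p \lesssim |f(0)|^p + \int_0^1 \frac{\Phi^p((1+s)/2)}{(1-s)^p}\,I(s)\,ds,
\]
where $\Phi(r)=\sup_{0<t<r}M_p(t,\Real f)$ and $I(s)=\int_s^1\om(r)(r-s)^{p-1}\,dr$. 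A dyadic decomposition of $[s,1]$ near $s$, together with the doubling $\widehat{\om}(r)\le C\widehat{\om}((1+r)/2)$ from $\om\in\DD$, yields the key estimate $I(s)\lesssim (1-s)^{p-1}\widehat{\om}(s)=(1-s)^p\widetilde{\om}(s)$; substituting and changing variable $r=(1+s)/2$ (using $\widetilde{\om}(2r-1)\lesssim\widetilde{\om}(r)$, itself a direct consequence of $\om\in\DD$) returns us to an estimate of the same shape as the first display.

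The main obstacle is then removing the supremum when $0<p<1$. The plan is to refine the dyadic bookkeeping above: by coupling the choice of $\rho$ in Lemma~2.2 to each dyadic scale and telescoping the increments $\widehat{\om}(s_k)-\widehat{\om}(s_{k+1})$ against a geometric sequence controlled by the doubling constant, the running supremum should be replaceable by $M_p(r,\Real f)$ at a comparable radius, with the loss absorbed in a convergent geometric sum. This sup-removal is where the hypothesis $\om\in\DD$ (as opposed to merely $\widetilde{\om}\in\DD$) is decisive, and I expect it to be the main technical difficulty of the proof.
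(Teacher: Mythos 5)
Your reduction for $1\le p<\infty$ is fine and in fact slightly slicker than the paper's (which invokes M.~Riesz for $p>1$): since $\om\in\DD$ forces $\widetilde\om\in\DD$ whenever $\widetilde\om$ is a weight, Theorem~\ref{th:maximalmedias} applies, and for $p\ge1$ the subharmonicity of $|\Real f|^p$ collapses the running supremum. The problem is the range $0<p<1$, where your argument stops exactly at the decisive step. You propose to keep using \cite[Lemma~2.2]{Pavcheck}, i.e.\ the bound $M_p(r,f')\lesssim(\rho-r)^{-1}\sup_{0<t<\rho}M_p(t,\Real f)$, and then to remove the supremum afterwards by ``coupling $\rho$ to each dyadic scale and telescoping.'' This cannot work as described: the supremum in that lemma runs over \emph{all} $t\in(0,\rho)$, not over a window near $\rho$, so no choice of $\rho$ and no dyadic bookkeeping near the boundary recovers the information that $M_p(\cdot,\Real f)$ may be much smaller at the radii where $\widetilde\om$ carries its mass than it was at some earlier radius where the supremum is attained. (For $0<p<1$ the means of a harmonic function are genuinely non-monotone --- e.g.\ $M_p^p(r,\Real\frac{1+z}{1-z})\asymp(1-r)^{1-p}\to0$ --- so the pointwise inequality $\sup_{0<s<r}M^p_p(s,\Real f)\lesssim M^p_p(r,\Real f)$ is simply false, and the loss is not a geometric-series artefact that can be ``absorbed.'') The missing idea, which is what the paper uses, is to replace the sup-based derivative estimate by the \emph{averaged} one of \cite[Lemma~5.3]{PavP},
\begin{equation*}
M^p_p(r,f')\le C R^{-p-1}\int_{r-R}^{r+R}M_p^p(\rho,\Real f)\,d\rho,\qquad 0<R<r<R+r<1,
\end{equation*}
with $R\asymp 1-r$. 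This localizes the dependence on $\Real f$ to radii comparable to $r$; Fubini then produces inner integrals of $\om$ over intervals of the form $[\frac{2s-1}{t},\frac{2s+1}{3t}]$, which are controlled by $\widehat\om\left(\frac{2s-1}{t}\right)\lesssim\widehat\om(s)$, a consequence of $\om\in\DD$ via \cite[Lemma~2.1(ii)]{PelSum14}. This is precisely where the stronger hypothesis $\om\in\DD$ enters, and it is the content you have not supplied.

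A secondary point: your intermediate ``key estimate'' $I(s)=\int_s^1\om(r)(r-s)^{p-1}\,dr\lesssim(1-s)^{p-1}\widehat\om(s)$ is not a consequence of $\om\in\DD$ when $0<p<1$. Doubling of $\widehat\om$ constrains the mass of $\om$ on dyadic annuli but says nothing about local concentration: a weight in $\DD$ may place essentially all of the mass of $[s,\frac{1+s}{2}]$ on an interval of length $\epsilon\ll1-s$ adjacent to $s$, making $I(s)\gtrsim\epsilon^{p-1}\widehat\om(s)\gg(1-s)^{p-1}\widehat\om(s)$. This particular step can be repaired by performing the $(u,r)$-integration in the other order and exploiting the monotonicity of your $\Phi$ (this is essentially the estimate of $I_2$ in the paper's proof of Theorem~\ref{th:1}), but note that any such repair leans again on the monotone majorant $\Phi$ and therefore does not help with the sup-removal, which remains the genuine gap.
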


On the most interesting range $0<p\le 1$, the proof of Theorem~\ref{th:W2nuevo} follows the argument used in the corresponding part of the proof of Theorem~\ref{th:maximalmedias}. However, the proof of Theorem~\ref{th:W2nuevo} is more involved because $|\Real f|^p$ is not subharmonic if $0<p<1$. 
Therefore we will use the inequality $M^p_p(r,f')\le C R^{-p-1} \int_{r-R}^{r+R} M_p^p(\rho,\Real f)\,d\rho$, valid for $0<R<r<R+r<1$, to pass from the derivative to the real part. This adds technical difficulties to the proof, which is given in Section~\ref{Sec:2}.

Our next result describes the radial weights for which the quantities appearing in Theorems~\ref{th:maximalmedias} and~\ref{th:W2nuevo} turn out equivalent norms in $A^p_\om$.

\begin{theorem}\label{th:Wintro}
Let $0<p<\infty$ and let $\om$ be a radial weight. Then the following statements are equivalent:
    \begin{enumerate}
     \item[\rm(i)] $\|f\|^p_{A^p_\om}
		\asymp\int_0^1\sup_{0<s< r}M^p_p(s,\Real f)\widetilde{\om}(r)\,dr
		\asymp\int_0^1\sup_{0<s< r}M^p_p(s,\Real f)\om(r)\,dr$ for all $f\in\H(\D)$;
    \item[\rm(ii)] $\|f\|_{A^p_\om}\asymp\|\Real f\|_{L^p_{\widetilde{\om}}}$ for all $f\in\H(\D)$;
    \item[\rm(iii)] $\om\in\DDD$.
    \end{enumerate}
\end{theorem}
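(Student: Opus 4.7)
The proof plan is built around the auxiliary characterization
\begin{equation*}
\om\in\DDD\iff \widetilde{\om}\in\DD\text{ and }\widehat{\widetilde{\om}}(r)\le C\widehat{\om}(r)\text{ for all }0\le r<1.
\end{equation*}
The key elementary ingredient is the lower bound
\begin{equation*}
\widehat{\widetilde{\om}}(r)\ge(\ln K)\,\widehat{\om}\!\left(1-\tfrac{1-r}{K}\right),\qquad 0\le r<1,\ K>1,
\end{equation*}
obtained by restricting $\widehat{\widetilde{\om}}(r)=\int_r^1\widehat{\om}(s)/(1-s)\,ds$ to $[r,1-(1-r)/K]$ and using the monotonicity of $\widehat{\om}$. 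Granted the conditions on the right, choosing $K$ with $\ln K>C$ yields $\om\in\Dd$, while $\widetilde{\om}\in\DD$ combined with \eqref{eq:i1} gives $\widehat{\om}\le C'\widehat{\widetilde{\om}}$, so that $\widehat{\om}\asymp\widehat{\widetilde{\om}}$; the doubling of $\widehat{\widetilde{\om}}$ implicit in $\widetilde{\om}\in\DD$ then transfers to $\widehat{\om}$ and yields $\om\in\DD$. Conversely, $\om\in\DD$ together with $\widetilde{\om}$ being a weight (automatic from $\om\in\Dd$) gives $\widetilde{\om}\in\DD$ by the calculation noted before Theorem~\ref{th:count1}, and $\om\in\Dd$ yields $\widehat{\widetilde{\om}}\le C\widehat{\om}$ via a dyadic decomposition of $[r,1)$ at the points $1-(1-r)/K^n$ together with the iterated bound $\widehat{\om}(1-(1-r)/K^n)\le C^{-n}\widehat{\om}(r)$.

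With this characterization in hand, (iii)$\Rightarrow$(ii) follows immediately: Theorem~\ref{th:W2nuevo} provides the upper bound $\|f\|_{A^p_\om}\lesssim\|\Real f\|_{L^p_{\widetilde{\om}}}$, while
\begin{equation*}
\|\Real f\|_{L^p_{\widetilde{\om}}}^p\le\|f\|_{A^p_{\widetilde{\om}}}^p=\int_0^1\widehat{\widetilde{\om}}(r)\,d(2rM_p^p(r,f))\le C\int_0^1\widehat{\om}(r)\,d(2rM_p^p(r,f))=C\|f\|_{A^p_\om}^p
\end{equation*}
via integration by parts against the non-decreasing $2rM_p^p(r,f)$ together with $\widehat{\widetilde{\om}}\le C\widehat{\om}$. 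The implication (iii)$\Rightarrow$(i) is analogous: Theorem~\ref{th:maximalmedias} gives the first upper bound (applicable since $\om\in\DD$ forces $\widetilde{\om}\in\DD$); the matching lower bound comes from $\sup_{0<s<r}M_p^p(s,\Real f)\le M_p^p(r,f)$ combined with the previously derived $\|f\|_{A^p_{\widetilde{\om}}}\lesssim\|f\|_{A^p_\om}$; and the second $\asymp$ in (i) is another Stieltjes integration by parts against the non-decreasing $\phi(r)=\sup_{0<s<r}M_p^p(s,\Real f)$, which reduces to the pointwise equivalence $\widehat{\om}\asymp\widehat{\widetilde{\om}}$ provided by the lemma.

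For the converses (i)$\Rightarrow$(iii) and (ii)$\Rightarrow$(iii) I verify both halves of the characterization. That $\widetilde{\om}\in\DD$ follows in each case from the necessity part of Theorem~\ref{th:maximalmedias}: in (i) the upper bound is exactly the hypothesis there, while in (ii) the trivial majorization $\|\Real f\|_{L^p_{\widetilde{\om}}}^p\le\int_0^1\sup_{0<s<r}M_p^p(s,\Real f)\widetilde{\om}(r)\,dr$ rewrites the assumption into that form. The remaining pointwise inequality $\widehat{\widetilde{\om}}\le C\widehat{\om}$ is to be extracted from the lower bounds in (i) and (ii) by testing against families of functions localized at prescribed scales, such as the monomials $f_n(z)=z^n$ for which $\sup_{0<s<r}M_p^p(s,\Real z^n)=C_p r^{np}$ and $\|\Real z^n\|_{L^p_{\widetilde{\om}}}^p=2C_p(\widetilde{\om})_{np+1}$, translating the hypothesis into the moment estimate $(\widetilde{\om})_x\lesssim\om_x$ along $x=np+1$; combining this with the moment formula $(\widetilde{\om})_x\asymp\widehat{\widetilde{\om}}(1-1/x)$ (available since $\widetilde{\om}\in\DD$) and the trivial lower bound $\om_x\gtrsim\widehat{\om}(1-1/x)$, the doubling of $\widehat{\widetilde{\om}}$ then allows the discrete comparison to be extended to every $r\in[0,1)$.

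The main obstacle will be this final upgrade from discrete-scale moment information to a pointwise inequality. Because $\om\in\DD$ is precisely the property we are trying to deduce, the standard two-sided asymptotic $\om_x\asymp\widehat{\om}(1-1/x)$ is not yet at our disposal; only the trivial one-sided bound $\om_x\gtrsim\widehat{\om}(1-1/x)$, obtained by restricting $\om_x=x\int_0^1 r^{x-1}\widehat{\om}(r)\,dr$ to $[0,1-1/x]$, is available. Closing the argument will require a careful exploitation of the already-obtained doubling of $\widehat{\widetilde{\om}}$, together possibly with more refined test functions than the bare monomials, to convert the inequality $\widehat{\widetilde{\om}}(1-1/x)\lesssim\om_x$ into the pointwise bound $\widehat{\widetilde{\om}}(r)\le C\widehat{\om}(r)$ needed to complete the deduction of $\om\in\DDD$ via the intermediate characterization.
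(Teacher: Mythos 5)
Your intermediate characterization of $\DDD$ (namely $\om\in\DDD$ if and only if $\widetilde\om\in\DD$ and $\widehat{\widetilde\om}\le C\widehat\om$) is correct and its proof is sound, and with it the sufficiency directions (iii)$\Rightarrow$(i),(ii) go through essentially as in the paper: Theorems~\ref{th:maximalmedias} and~\ref{th:W2nuevo} for the upper bounds, and an integration by parts against $\widehat{\om}\asymp\widehat{\widetilde\om}$ for the lower bounds. Likewise, extracting $\widetilde\om\in\DD$ from (i) or (ii) via the necessity part of Theorem~\ref{th:maximalmedias} is fine. The genuine gap is exactly where you flag it: the necessity directions are not closed. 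From monomial testing you obtain $\widetilde\om_{np+1}\lesssim\om_{np+1}$, and your plan to upgrade this to $\widehat{\widetilde\om}(r)\le C\widehat\om(r)$ founders because the only comparison between $\om_x$ and $\widehat\om(1-1/x)$ available before $\om\in\DD$ is established is $\om_x\gtrsim\widehat\om(1-1/x)$, which points the wrong way; the chain $\widehat{\widetilde\om}(1-1/x)\asymp\widetilde\om_x\lesssim\om_x$ cannot be continued to $\lesssim\widehat\om(1-1/x)$. Doubling of $\widehat{\widetilde\om}$ does not repair this, since the uncontrolled contribution to $\om_x$ is $\int_0^{1-1/x}r^x\om(r)\,dr$, i.e. the bulk of the mass of $\om$, which the tail behaviour of $\widetilde\om$ says nothing about.

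The paper closes this step by an algebraic move you should adopt: integrate by parts inside the moment itself, $\om_{np+1}=(np+1)(\widetilde\om_{[1]})_{np}$ (since $\widehat\om=\widetilde\om_{[1]}$), so that the two-sided monomial estimate $\om_{np+1}\asymp\widetilde\om_{np+1}$ becomes $x(\widetilde\om_{[1]})_x\asymp\widetilde\om_x$ for all $x\ge0$ --- a condition on $\widetilde\om$ alone. The inequality $\le$ here is the moment characterization \eqref{Corollary:D-hatn} of $\DD$, while the reverse inequality is the corresponding moment characterization of $\Dd$ from \cite[(1.2), (1.3), Theorem~3]{PelaezRattya2019}; together they give $\widetilde\om\in\DDD$ directly. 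This is the ingredient missing from your toolkit: you invoke only the $\DD$ half of the moment characterization. Once $\widetilde\om\in\DDD$ is known, $\widetilde\om\in\Dd$ yields $\widehat{\widetilde\om}(r)\le\widehat\om(r)\log K+C^{-1}\widehat{\widetilde\om}(r)$ by splitting $\int_r^1\widetilde\om(s)\,ds$ at $1-(1-r)/K$ and using $\widetilde\om(s)\le\widehat\om(r)/(1-s)$, hence $\widehat{\widetilde\om}\lesssim\widehat\om$, and your own characterization then delivers $\om\in\DDD$. Note also that in case (ii) the monomial computation actually gives the two-sided comparison $\om_{np+1}\asymp\widetilde\om_{np+1}$, not merely the one direction you record, and both directions are needed for this argument.
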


The proof of Theorem~\ref{th:Wintro} is given in Section~\ref{Sec:2}, and it strongly uses Theorems~\ref{th:maximalmedias} and~\ref{th:W2nuevo} together with new and known \cite{PelaezRattya2019} descriptions of the class $\DDD$. In particular, it is proved that $\om\in\DDD$ if and only if $\widetilde{\om}\in\DDD$, which is in stark contrast with Theorem~\ref{th:count1}. This implies that the weight $\om$ constructed in Theorem~\ref{th:count1} can not belong to $\DDD$. In fact, one can show that this weight satisfies
	$$
	\frac1{\left(\log\frac1{1-r}\right)^2}\lesssim\widehat{\om}(r)\lesssim\frac{\log\log\log\frac1{1-r}}{\left(\log\frac1{1-r}\right)^2},\quad r\to1^-,
	$$
and therefore induces a weighted Bergman space $A^p_\om$ essentially smaller than any of the standard ones.

As for the question of when the weighted Bergman space is closed by harmonic conjugation, by combining \eqref{eq:LP1} together with \cite[Lemma~3.2]{PavP} and the reasoning in \cite[(3.1)]{PelaezRattya2019}, we obtain 
	\begin{equation}\label{eq:A1}
	\|f\|_{A^1_\om}\le C\left(|f(0)|+\|\Real f\|_{a^1_\om}\right), \quad f\in\H(\D), 
	\end{equation}
provided $\om\in\DDD$. However, we need to pose a stronger hypothesis on $\om$ to extend this result to the range $0<p<1$. 
Recall that a radial weight is regular if $\om(r)\asymp\frac{\widehat{\om}(r)}{1-r}$ for all $0\le r<1$.
Standard weights as well as all the weights in \cite[(4.4)-(4.6)]{Si} are regular. By \cite[Lemma~1.1]{PelRat}, \cite[Lemma~2.1]{PelSum14} and the description of $\Dd$ given in \cite[(2.27)]{PelaezRattya2019}, we deduce that each regular weight belongs to~$\DDD$. Therefore, as a byproduct of Theorem~\ref{th:Wintro} we obtain the following result. 

\begin{corollary}\label{co:final}
Let $0<p<\infty$ and let $\om$ be a regular weight. Then $\|f\|_{A^p_\om}\asymp\|\Real f\|_{L^p_{\omega}}$ for all $f\in\H(\D)$.
\end{corollary}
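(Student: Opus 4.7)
The plan is to derive Corollary~\ref{co:final} as a direct consequence of Theorem~\ref{th:Wintro}. The only two ingredients needed are that every regular weight lies in the class $\DDD$, and that for a regular weight $\om$ we have $\widetilde{\om}\asymp\om$ pointwise on $[0,1)$.

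First I would unpack the definition: $\om$ regular means $\om(r)\asymp\widehat{\om}(r)/(1-r)=\widetilde{\om}(r)$ for all $0\le r<1$. In particular $\widetilde{\om}$ is a weight, and more importantly the measures $\om\,dA$ and $\widetilde{\om}\,dA$ induce comparable $L^p$-norms, so
\begin{equation*}
\|\Real f\|_{L^p_{\widetilde{\om}}}\asymp\|\Real f\|_{L^p_\om},\quad f\in\H(\D).
\end{equation*}
Next I would verify $\om\in\DDD$. The introduction already points out that this follows by combining \cite[Lemma~1.1]{PelRat}, \cite[Lemma~2.1]{PelSum14} and the characterization of $\Dd$ given in \cite[(2.27)]{PelaezRattya2019}; concretely, regularity forces $\widehat{\om}$ to decrease comparably to $(1-r)\om(r)$, which yields both the doubling condition defining $\DD$ and the reverse-doubling condition defining $\Dd$.

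With $\om\in\DDD$ in hand, I would invoke the implication (iii)$\Rightarrow$(ii) of Theorem~\ref{th:Wintro} to obtain
\begin{equation*}
\|f\|_{A^p_\om}\asymp\|\Real f\|_{L^p_{\widetilde{\om}}},\quad f\in\H(\D),
\end{equation*}
and then replace $\widetilde{\om}$ by $\om$ on the right-hand side using the pointwise equivalence noted above. This gives the desired $\|f\|_{A^p_\om}\asymp\|\Real f\|_{L^p_\om}$.

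There is essentially no obstacle here beyond checking the bookkeeping: the main work was already carried out in Theorem~\ref{th:Wintro}, and the passage from the statement of that theorem to the corollary relies only on the trivial pointwise identification $\widetilde{\om}\asymp\om$ available for regular weights. If I wanted a self-contained alternative proof avoiding the full strength of Theorem~\ref{th:Wintro}, I could appeal directly to Theorem~\ref{th:W2nuevo} for the upper bound $\|f\|_{A^p_\om}\lesssim\|\Real f\|_{L^p_{\widetilde{\om}}}\asymp\|\Real f\|_{L^p_\om}$, and pair it with the elementary reverse bound $\|\Real f\|_{L^p_\om}\le\|f\|_{A^p_\om}$, which holds for any radial weight and requires no hypothesis on $\om$.
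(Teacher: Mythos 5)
Your proof is correct and follows the paper's own route exactly: regularity gives $\om\in\DDD$ (via the cited lemmas), Theorem~\ref{th:Wintro} then yields $\|f\|_{A^p_\om}\asymp\|\Real f\|_{L^p_{\widetilde{\om}}}$, and the pointwise equivalence $\widetilde{\om}\asymp\om$ for regular weights finishes the argument. Your alternative via Theorem~\ref{th:W2nuevo} together with the trivial bound $\|\Real f\|_{L^p_\om}\le\|f\|_{A^p_\om}$ is also valid and slightly more economical, but the main line of reasoning is the same as the paper's.
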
 

Corollary~\ref{co:final} overlaps the earlier result \cite[Theorem~5.1]{PavP} of the same spirit, but none of these results is a consequence of the other one. Namely, Corollary~\ref{co:final} does not assume the continuity of $\om$, but $(5.1)$ in \cite[Theorem~5.1]{PavP}, or the equivalent condition $\sup_{0<r<1}\frac{|\om'(r)|\widehat{\om}(r)}{\om^2(r)}<\infty$, allows the weight to tend to zero much faster than what is allowed by the regularity hypothesis of Corollary~\ref{co:final}; for example the exponential weight given in \cite[Corollary~7.1]{PavP} is not regular.

The last of our main results is Theorem~\ref{pr:count} below. It shows that the validity of \eqref{maxtildeintro} in Theorem~\ref{th:maximalmedias} is equally much related to the regularity of the weight as to its growth/decay. In view of Corollary~\ref{co:final} and the fact that the function $\vp$ in Theorem~\ref{pr:count} may decrease to zero slower than any pregiven rate, it is possible to find $\om$ such that \eqref{maxtildeintro} fails for all $0<p<\infty$, but the space $A^p_{\om}$ lies between two weighted Bergman spaces of which one is just a bit larger than the other and both are closed by conjugation. The proof of Theorem~\ref{pr:count} is given at the end of the paper in Section~\ref{sec:3}.

\begin{theorem}\label{pr:count}
Let $\vp:[0,1]\to[0,\infty)$ be a decreasing function such that $\lim_{r\to1^-}\vp(r)=0$. Then there exists a radial weight $\om=\om_\vp$ such that $\widetilde{\om}\not\in\DD$, but $A^p\subset A^p_{\om}\subset A^p_{\vp}$ for all $0<p<\infty$. If $\vp$ satisfies $-\vp'(t)/\vp(t)\lesssim1/(1-t)$ for all $0\le t<1$, then $\vp$ is regular.
\end{theorem}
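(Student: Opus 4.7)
My plan is to construct $\om_\vp$ explicitly in the form
\[
\om_\vp(r)\;=\;\vp(r)+\chi_E(r),\qquad E\;=\;\bigcup_{n\ge N}[r_n,\,r_n+\delta_n],
\]
with $r_n=1-\varepsilon_n$ for a rapidly decreasing sequence $\varepsilon_n\downarrow 0$, $\delta_n=\sqrt{\vp(r_n)}\,\varepsilon_n$, and $\varepsilon_{n+1}$ chosen inductively small enough so that the bumps are pairwise disjoint (for example $\varepsilon_{n+1}\le\varepsilon_n/2$) and also $\delta_{n+1}\log(1/\varepsilon_{n+1})\le 2^{-n}\,\vp(r_n)\,\varepsilon_n$. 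The latter is feasible since $\sqrt{\vp(1-\varepsilon)}\,\varepsilon\log(1/\varepsilon)\to 0$ as $\varepsilon\to 0^+$. The index $N$ is chosen so that $\sqrt{\vp(r_n)}\le 1/2$ for $n\ge N$. The defining feature is that the bumps are \emph{thin relative to their distance to the boundary}: $\delta_n/\varepsilon_n=\sqrt{\vp(r_n)}\to 0$.

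The sandwich inclusions $A^p\subset A^p_{\om_\vp}\subset A^p_\vp$ are then immediate: $\om_\vp\le\vp(0)+1$ is bounded, so $\|f\|_{A^p_{\om_\vp}}^p\le(\vp(0)+1)\|f\|_{A^p}^p$, and $\om_\vp\ge\vp$ pointwise gives the other inclusion trivially. The substance of the theorem is the failure $\widetilde{\om_\vp}\notin\DD$. By the characterization recalled just before Theorem~\ref{th:count1} (namely, $\widetilde\om\in\DD$ iff $\widetilde\om$ is a weight and $\widehat\om(r)\lesssim\widehat{\widetilde\om}(r)$), and because $\widetilde{\om_\vp}$ is automatically a weight ($\om_\vp$ being bounded), it suffices to exhibit a sequence along which $\widehat{\om_\vp}(r)/\widehat{\widetilde{\om_\vp}}(r)\to\infty$. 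I would test at $r=r_n$. Using the Fubini identity $\widehat{\widetilde{\om}}(r)=\int_r^1\om(s)\log((1-r)/(1-s))\,ds$, I split $\widehat{\widetilde{\om_\vp}}(r_n)$ into three pieces and estimate each. The $\vp$-contribution is at most $\vp(r_n)\varepsilon_n$ by monotonicity of $\vp$ together with the identity $\int_{r_n}^1\log((1-r_n)/(1-s))\,ds=\varepsilon_n$. The $n$th bump contributes $\asymp\delta_n^2/(2\varepsilon_n)=\vp(r_n)\varepsilon_n/2$, since on the short interval $[r_n,r_n+\delta_n]$ the logarithmic factor obeys $\log((1-r_n)/(1-s))\asymp(s-r_n)/\varepsilon_n$. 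The contribution of the later bumps is dominated by $\sum_{k>n}\delta_k\log(1/\varepsilon_k)$, which by the inductive choice is summable and much smaller than $\vp(r_n)\varepsilon_n$. Together these give $\widehat{\widetilde{\om_\vp}}(r_n)\lesssim\vp(r_n)\varepsilon_n$, while $\widehat{\om_\vp}(r_n)\ge\delta_n=\sqrt{\vp(r_n)}\,\varepsilon_n$, so the ratio is $\gtrsim 1/\sqrt{\vp(r_n)}\to\infty$.

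The main obstacle is exactly this tail estimate for the later bumps: each bump at $r_k$ with $k>n$ carries a logarithmic factor $\log((1-r_n)/(1-r_k))$ that grows with $k$, so the sequence $\varepsilon_k$ must be chosen recursively fast enough to defeat this growth, and when $\vp$ itself decays extremely rapidly the inductive step has to be handled with some care (choosing $\varepsilon_n$ adaptively rather than by a fixed rule such as $\varepsilon_{n+1}=\varepsilon_n^{2}$). For the last claim of the theorem, assume $-\vp'(t)/\vp(t)\le C/(1-t)$. Integrating from $r$ to $(1+r)/2$ yields $\log(\vp(r)/\vp((1+r)/2))\le C\log 2$, hence $\vp((1+r)/2)\ge 2^{-C}\vp(r)$. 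Since $\vp$ is decreasing, $\widehat{\vp}(r)\le\vp(r)(1-r)$, and on the other hand
\[
\widehat{\vp}(r)\;\ge\;\int_{r}^{(1+r)/2}\vp(s)\,ds\;\ge\;\vp\!\left(\tfrac{1+r}{2}\right)\cdot\tfrac{1-r}{2}\;\ge\;2^{-C-1}\vp(r)(1-r),
\]
so $\widehat{\vp}(r)\asymp\vp(r)(1-r)$, which is exactly the definition of $\vp$ being regular.
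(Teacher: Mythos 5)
Your construction is correct, and it takes a genuinely different route from the paper's. The paper recycles the weight $\om(r)=\sum_n\chi_{[r_{2n+1},r_{2n+2}]}(r)$ with $r_x=1-2^{-x\psi(x)}$ and $\psi(x)=-\frac1{C_1}\log_2\vp(1-\frac1x)$ from the proof of \cite[Theorem~14]{PelaezRattya2019}, inherits the sandwich $A^p\subset A^p_\om\subset A^p_\vp$ from that source, and then rules out $\widetilde\om\in\DD$ directly from the doubling definition via a lengthy explicit two-sided computation of $\widehat{\widetilde\om}$ on each interval $[r_{2n},r_{2n+1}]$ and $[r_{2n+1},r_{2n+2}]$, ending with $\widehat{\widetilde{\om}}(r_{2n+2})/\widehat{\widetilde{\om}}(2r_{2n+2}-1)\to0$. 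You instead superimpose unit-height bumps of width $\delta_n=\sqrt{\vp(r_n)}\,\varepsilon_n$ directly on $\vp$, which makes both inclusions trivial pointwise comparisons of weights, and you detect the failure through the characterization $\widetilde\om\in\DD\Leftrightarrow\widehat\om\lesssim\widehat{\widetilde\om}$ of Theorem~\ref{th:1} together with the Fubini identity $\widehat{\widetilde\om}(r)=\int_r^1\om(s)\log\frac{1-r}{1-s}\,ds$. The mechanism is exactly right: a bump of relative width $\sqrt{\vp(r_n)}$ contributes $\delta_n$ to $\widehat\om(r_n)$ but only $\asymp\delta_n^2/\varepsilon_n$ to $\widehat{\widetilde\om}(r_n)$, and your recursive smallness condition on $\delta_{n+1}\log(1/\varepsilon_{n+1})$ correctly tames the tail of later bumps; I checked the three pieces and the constants work out, giving $\widehat{\om}(r_n)/\widehat{\widetilde{\om}}(r_n)\gtrsim\vp(r_n)^{-1/2}\to\infty$. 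Your argument is shorter and self-contained, whereas the paper's version additionally delivers $\om\notin\DD$, $\vp\in\DDD$ and precise asymptotics for $\widehat{\widetilde\om}$. Two minor points to tidy up: if $\vp$ vanishes on some $[a,1]$ then $\delta_n=0$ eventually and $\widehat{\om_\vp}$ degenerates, so first replace $\vp$ by $\max\{\vp,\eta\}$ for a positive decreasing $\eta\to0$ (this only shrinks the middle space, so the sandwich survives); and the choice of $N$ with $\vp(r_N)\le\frac14$ should be made by fixing $\varepsilon_N$ small enough before the induction starts. Your proof of the final claim, integrating $-\vp'/\vp\le C/(1-t)$ over $[r,\frac{1+r}{2}]$ to get $\vp(\frac{1+r}{2})\ge2^{-C}\vp(r)$ and hence $\widehat\vp(r)\asymp(1-r)\vp(r)$, is correct and more direct than the paper's detour through $\psi$.
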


The letter $C=C(\cdot)$ will denote an absolute constant whose value depends on the parameters indicated
in the parenthesis, and may change from one occurrence to another.
We will use the notation $a\lesssim b$ if there exists a constant
$C=C(\cdot)>0$ such that $a\le Cb$, and $a\gtrsim b$ is understood
in an analogous manner. In particular, if $a\lesssim b$ and
$a\gtrsim b$, then we write $a\asymp b$ and say that $a$ and $b$ are comparable. This notation has already been used above in the introduction.

\section{Proofs of Theorems~\ref{th:maximalmedias}, \ref{th:W2nuevo} and \ref{th:Wintro}}\label{Sec:2}

Before presenting the proofs, let us see that $a^1_\om$ 
is not closed by harmonic conjugation if $\om\in\DD\setminus\Dd$ even if $\widetilde{\om}$ is a weight.
Let $\psi$ be a positive increasing function on $[0,\infty)$ such that $\psi(x)x^{-\alpha}$ is essentially decreasing on $[\frac12,\infty)$ for some $\alpha>0$ large enough. Shields and Williams \cite[Theorem 1']{ShiWiMich}  showed that if $\psi$ is in addition sufficiently smooth, for example convex or concave, then there exists an $f\in\H(\D)$ such that $M_1(r,\Real f)\lesssim\psi\left(\frac1{1-r}\right)$, but 	
	$$
	M_1(r,f)\asymp\int_{\frac12}^{1/(1-r)}\psi(t)\,\frac{dt}{t},\quad r\to1^-.
	$$
Let us observe that 
	$$
	\limsup_{r\to1^-}\frac{\int_{\frac12}^{1/(1-r)}\psi(t)\frac{dt}{t}}{\psi\left(\frac1{1-r}\right)}=\infty
	$$
if there does not exist $\b>0$ such that $\psi(x)x^{-\b}$ is essentially increasing on $[\frac12,\infty)$. This is the case for example, if $\psi(x)=\log(x+2)$~\cite[Lemma 2]{ShiWiMich}. Let now $\om$ be a radial weight such that $\widetilde{\om}\in\DD$  and therefore $\widehat{\om}\lesssim\widehat{\widetilde{\om}}$ on $[0,1)$. Assume that $\widehat{\widetilde{\om}}$ is smooth enough meaning that there exists a $\psi=\psi_\om$ for which
	$$
	\psi\left(x\right)\asymp\left(\widehat{\widetilde{\om}}\left(1-\frac1x\right)\right)^{-1},\quad 0\le x<\infty.
	$$
Then
	$$
	\|f\|_{A^1_{\om}}
	\gtrsim\int_0^1\left(\int_1^{\frac1{1-r}}\frac{\psi(t)}{t}\,dt\right)\om(r)r\,dr
	\gtrsim\int_{\frac12}^1\psi\left(\frac1{1-r}\right)\widetilde{\om}(r)\,dr
	=-\lim_{r\to1^-}\log\widehat{\widetilde{\om}}(r)-c=\infty,
	$$
but
	$$
	\|\Real f\|_{A^1_\om}
	\lesssim\int_0^1\psi\left(\frac1{1-t}\right)\om(s)\,ds=c+\int_0^1\left(\frac{\widehat{\om}(t)}{\widehat{\widetilde{\om}}(t)}\right)^2\frac{dt}{1-t}.
	$$
This last integral might very well converge as is seen by considering the rapidly increasing weight $v_\alpha(r)=(1-r)^{-1}\left(\log\frac{e}{1-r}\right)^{-\alpha}$, where $2<\alpha<\infty$. Namely, $\widehat{v_\alpha}(t)/\widehat{\widetilde{v_\alpha}}(t)\asymp\left(\log\frac{e}{1-r}\right)^{-1}$ for all $0\le r<1$. In this case the natural choice for the smooth $\psi$ is $\psi(x)=(\log(x+2))^{\alpha-2}$, which satisfies the hypotheses of the result by Shields and Williams.

Now we proceed to prove following result which contains
Theorem~\ref{th:maximalmedias}.

\begin{theorem}\label{th:1}
Let $0<p<\infty$ and let $\om$ be a radial weight such that $\widetilde{\om}$ is a weight. Then the following statements are equivalent:
    \begin{enumerate}
    \item[\rm(i)] There exists a constant $C=C(\om,p)>0$ such that 
				\begin{equation}\label{maxtilde}
				\|f\|^p_{A^p_\om}\le C \int_0^1 \sup_{0<s< r}M^p_p(s,\Real f)\widetilde{\om}(r)\,dr,
				\end{equation}
for all $f\in\H(\D)$;
    \item[\rm(ii)] There exists a constant $C=C(\om,p)>0$ such that $\|f\|_{A^p_\om}\le C\|f\|_{A^p_{\widetilde{\om}}}$ for all $f\in\H(\D)$;
    \item[\rm(iii)]  There exists a constant $C=C(\om)>0$ such that $\widehat{\om}(r)\le C\widehat{\widetilde{\om}}(r)$ for all $0\le r<1$;
    \item[\rm(iv)]   $\widetilde{\om}\in\DD$.
    \end{enumerate}
\end{theorem}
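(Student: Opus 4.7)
The plan is to prove the cycle $(i) \Rightarrow (ii) \Rightarrow (iv) \Rightarrow (iii) \Rightarrow (i)$, with $(iii) \Leftrightarrow (iv)$ handled separately via the identity $\widehat{\widetilde{\om}}(r) - \widehat{\widetilde{\om}}(r') = \int_r^{r'} \widehat{\om}(s)/(1-s)\,ds$ for $r < r' < 1$. The implication $(i) \Rightarrow (ii)$ is immediate, because $|\Real f| \le |f|$ pointwise and $s \mapsto M_p(s, f)$ is nondecreasing for $f \in \H(\D)$, so $\sup_{0 < s < r} M_p^p(s, \Real f) \le M_p^p(r, f)$ and the right-hand side of \eqref{maxtilde} majorizes $\|f\|^p_{A^p_{\widetilde{\om}}}$ up to the harmless $r$-factor from polar coordinates.

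For $(ii) \Rightarrow (iv)$ I would test (ii) on the monomials $z^n$ to obtain $\om_{np+1} \lesssim \widetilde{\om}_{np+1}$ for all $n \in \N$, from which the moment-based description of $\DD$ in \cite{PelaezRattya2019} extracts $\widetilde{\om} \in \DD$. The equivalence $(iii) \Leftrightarrow (iv)$ then follows from the identity above: taking $r' = (1+r)/2$ and bounding the integrand below by $\widehat{\om}((1+r)/2)\log 2$, the doubling of $\widehat{\widetilde{\om}}$ gives $\widehat{\om}((1+r)/2) \lesssim \widehat{\widetilde{\om}}(r)$, and one further use of doubling elevates this to $\widehat{\om}(r) \lesssim \widehat{\widetilde{\om}}(r)$, which is (iii); conversely, taking $r' = 1 - (1-r)/K$ and invoking (iii) yields $\widehat{\widetilde{\om}}(r) \le 2\widehat{\widetilde{\om}}(1-(1-r)/K)$ provided $K > 1$ is chosen so small that $C\log K < 1/2$, after which iteration produces the $\DD$ condition at scale $(1+r)/2$.

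The heart of the proof is $(iii) \Rightarrow (i)$. For $p > 1$ this is direct: the M.~Riesz theorem \eqref{pillu} applied to the dilate $f_r$ gives $M_p(r, f) \lesssim |f(0)| + M_p(r, \Real f)$, and since $M_p^p(\cdot, \Real f)$ is nondecreasing for $p \ge 1$, an integration by parts combined with $\widehat{\om} \le C\widehat{\widetilde{\om}}$ transforms $\int_0^1 M_p^p(r, \Real f)\om(r)\,dr$ into $\int_0^1 M_p^p(r, \Real f)\widetilde{\om}(r)\,dr$, which is majorized by the right-hand side of (i). For $0 < p \le 1$ the M.~Riesz inequality fails, so I would instead apply the Luecking embedding $D^p_{p-1} \subset H^p$ of \cite{LUPAMS88} to the dilate $f_r$, obtaining $M_p^p(r, f) \lesssim |f(0)|^p + \int_0^r M_p^p(u, f')(r-u)^{p-1}\,du$, and then convert $f'$ back to $\Real f$ using Pavlovi\'c's lemma \cite[Lemma 2.2]{Pavcheck}.

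The chief technical difficulty is selecting the Pavlovi\'c parameter $\rho = \rho(u, r)$ so as to avoid a logarithmic divergence: the naive choice $\rho = r$ produces the non-integrable density $(r-u)^{-1}$ in the inner integral. A remedy is to take $\rho$ at an intermediate scale such as $\rho = u + (r-u)^{1/2}$, appropriately truncated to remain below $1$, which yields the bound $M_p^p(u, f') \lesssim (r-u)^{-p/2}\sup_{t < \rho} M_p^p(t, \Real f)$ together with the integrable density $(r-u)^{p/2 - 1}$. The supremum window remains controlled by $\sup_{t < r} M_p^p(t, \Real f)$, and applying Fubini together with (iii) in the form $\widehat{\om} \le C\widehat{\widetilde{\om}}$ reduces the resulting double integral to $\int_0^1 \sup_{0 < s < r} M_p^p(s, \Real f)\widetilde{\om}(r)\,dr$, which is (i).
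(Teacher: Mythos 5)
Most of your cycle is sound and close to the paper's argument: the implication (i)$\Rightarrow$(ii), the monomial test for (ii)$\Rightarrow$(iv), and the case $p>1$ of (iii)$\Rightarrow$(i) via M.~Riesz plus integration by parts all match the paper. Your derivation of (iii)$\Leftrightarrow$(iv) from the identity $\widehat{\widetilde{\om}}(r)-\widehat{\widetilde{\om}}(r')=\int_r^{r'}\widehat{\om}(s)\,ds/(1-s)$, with the small-$K$ self-improvement and iteration, is correct and is a somewhat more elementary alternative to the paper's route through \cite[Lemma~2.1]{PelSum14}. (Minor slip: in (i)$\Rightarrow$(ii) you want the right-hand side of \eqref{maxtilde} to be \emph{majorized by} $\|f\|^p_{A^p_{\widetilde{\om}}}$, not the reverse; the inequality chain you write does give the correct direction.)

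The key step, (iii)$\Rightarrow$(i) for $0<p\le1$, is broken. Your choice $\rho=u+(r-u)^{1/2}$ does not stay below $r$: since $0\le r-u\le1$ one has $(r-u)^{1/2}\ge r-u$, so $\rho\ge r$ always (and $\rho$ may even exceed $1$, whence the truncation, which then destroys the bound $(\rho-u)^{-1}\asymp(r-u)^{-1/2}$). Consequently $\sup_{0<t<\rho}M_p^p(t,\Real f)$ is \emph{not} controlled by $\sup_{0<t<r}M_p^p(t,\Real f)$, and no intermediate scale can fix this: any $\rho\le r$ gives $(\rho-u)^{-p}\ge(r-u)^{-p}$ and reinstates the non-integrable density $(r-u)^{-1}$. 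More decisively, the pointwise estimate your argument would yield, $M_p^p(r,f)\lesssim|f(0)|^p+\sup_{0<s<r}M_p^p(s,\Real f)$, is false: for $p=1/k$ and $f(z)=e^{i(k-1)\pi/2}(1-z)^{-k}$ the right-hand side stays bounded while $M_p^p(r,f)\asymp\log\frac{e}{1-r}$. It would also imply $\|f\|^p_{A^p_\om}\lesssim\int_0^1\sup_{0<s<r}M_p^p(s,\Real f)\,\om(r)\,dr$ for \emph{every} radial weight, which the Hardy--Littlewood and Shields--Williams examples exclude even when (iii) holds. The logarithmic loss is genuine and is precisely what the replacement of $\om$ by $\widetilde{\om}$ in (i) absorbs. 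The paper's proof accepts the large window by taking $\rho=\frac{1+tr}{2}$, splits the resulting double integral at $t=r$, and then uses Fubini, integration by parts, (iii) and $\widetilde{\om}\in\DD$ to convert integrals of $h\bigl(\frac{1+t}{2}\bigr)$ against $\om$ or $\frac{dt}{1-t}$ into $\int_0^1 h(t)\widetilde{\om}(t)\,dt$. Some version of that global bookkeeping is unavoidable; a purely pointwise reduction to $\sup_{0<s<r}M_p^p(s,\Real f)$ cannot work.
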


\begin{proof}
It is clear that (i) implies (ii). Assume now (ii). It is known that a radial weight $\nu$ belongs to $\DD$ if and only if for some (equivalently for each) $\b>0$, there exists a constant $C=C(\om,\b)>0$ such that
    \begin{equation}\label{Corollary:D-hatn}
    x^\b(\nu_{[\b]})_x\le C\nu_x,\quad 0\le x<\infty,
    \end{equation}
see \cite[(3.3)]{PelaezRattya2019} for details. By testing (ii) with the monomials $m_n(z)=z^n$, we deduce $\om_{np+1}\le C\widetilde{\om}_{np+1}$, from which an integration by parts on the left yields $(np+1)(\widetilde{\om}_{[1]})_{np}\le C\widetilde{\om}_{np+1}$ for all $n\in\N\cup\{0\}$. By choosing $np\le x<np+1$ we obtain $x(\widetilde{\om}_{[1]})_{x}\le C\widetilde{\om}_{x}$ for all $0\le x<\infty$. Hence $\widetilde{\om}\in\DD$ by \eqref{Corollary:D-hatn}, and thus (iv) is satisfied.

Assume next (iv). Then there exists a constant $C=C(\widetilde{\om})>1$ such that
	$$
	\log2\widehat{\om}\left(\frac{1+r}{2}\right)
	\le\int_{r}^{\frac{1+r}{2}}\widetilde{\om}(s)\,ds
	\le C\int_{\frac{1+r}{2}}^1\widetilde{\om}(s)\,ds,\quad 0\le r<1.
	$$
This yields (iii).

To complete the proof it remains to show that (i) follows from (iii). Let $f\in A^p_{\widetilde{\om}}$ with $f(0)=0$. Then (iii) yields
    $$
    M^p_p(r,f)\widehat{\om}(r)\lesssim M^p_p(r,f)\widehat{\widetilde{\om}}(r)\le\frac1r\int_{\D\setminus D(0,r)}|f(z)|^p\widetilde{\om}(z)\,dA(z)\to0,\quad r\to1^-.
    $$
Hence an integration by parts together with (iii) gives
  \begin{equation}\label{8n}
  \begin{split}
  \int_{0}^{1}M_p^p(r,f)\om(r)\,dr
  &=\int_{0}^{1} \frac{\partial}{\partial r}M_p^p(r,f)\omg(r)\,dr
  \lesssim\int_{0}^{1} \frac{\partial}{\partial r}M_p^p(r,f)\widehat{\widetilde{\om}}(r)\,dr\\
  &=\int_{0}^{1}M_p^p(r,f)\widetilde{\om}(r)\,dr,
  \end{split}
  \end{equation}
from which (ii) follows by standard arguments. If $p>1$, then the inequality in (i) follows from (ii) and M. Riesz Theorem~\cite{Duren}. Thus we have deduced (i) from (iii) in the case $p>1$ by passing through (ii).

To deal with the case $0<p\le1$, we first show that (iii) implies (iv). Let $\g>C$, where $C=C(\om)>0$ is that of (iii). Then Fubini's theorem yields
	\begin{equation*}
	\begin{split}
	\int_0^t\left(\frac{1-t}{1-s}\right)^\g\widetilde{\om}(s)\,ds
	&\le C\int_0^t\left(\frac{1-t}{1-s}\right)^\g\frac{\widehat{\widetilde{\om}}(s)}{1-s}\,ds\\
	&=C\widehat{\widetilde{\om}}(t)(1-t)^\g\int_0^t\frac{ds}{(1-s)^{\g+1}}
	+C\int_0^t\left(\frac{1-t}{1-s}\right)^\g\left(\int_s^t\widetilde{\om}(x)\,dx\right)\frac{ds}{1-s}\\
	&\le\frac{C}{\g}\widehat{\widetilde{\om}}(t)+\frac{C}{\g}\int_0^t\left(\frac{1-t}{1-x}\right)^\g\widetilde{\om}(x)\,dx,
	\end{split}
	\end{equation*}
and it follows that
	$$
	\int_0^t\left(\frac{1-t}{1-s}\right)^\g\widetilde{\om}(s)\,ds\le\frac{C}{\g-C}\widehat{\widetilde{\om}}(t),\quad 0\le t<1.
	$$
Therefore $\widetilde\om\in\DD$ by \cite[Lemma~2.1]{PelSum14}.

Assume now, without loss of generality, that $f(0)=0$. Denote $h(r)=\sup_{0<s<r}M^p_p(s,\Real f)$ for short, and assume $h\in L^p_{\widetilde{\om}}$, otherwise in (i) there is nothing to prove. The Dirichlet-type space $D^p_{p-1}$ consists of those $f$ such that $f'\in A^p_{p-1}$, and it satisfies the well-known embedding
	\begin{equation}\label{eq:DpHp}
	D^p_{p-1}\subset H^p,\quad 0<p\le 2,
	\end{equation}
by \cite{LUPAMS88}. Moreover, by \cite[Lemma~2.2]{Pavcheck}, there exists a constant $C=C(p)>0$ such that
	\begin{equation}\label{eq:derivativemaximalu}
	M_p(r,f')\le C(\rho-r)^{-1}\sup_{0<t<\rho}M_p(t,\Real f),\quad 0\le r<\rho<1.
	\end{equation}
By combining these facts and using Fubini's theorem we deduce
    \begin{equation}
    \begin{split}\label{eq:m1}
    \|f\|^p_{A^p_\om}
    &=\int_{0}^1\|f_r\|_{H^p}^p\om(r)r\,dr
    \lesssim\int_{0}^1\left(\int_0^1M_p^p(rt,f')(1-t)^{p-1}\,dt\right)r^{p+1}\om(r)\,dr\\
    &\lesssim\int_{0}^1\left(\int_0^1h\left( \frac{1+tr}{2}\right)\frac{(1-t)^{p-1}}{(1-tr)^p}\,dt\right)r^{p+1}\om(r)\,dr
    =I_1(r)+I_2(r),
    \end{split}
    \end{equation}
where 
		$$   
		I_1(r)=\int_{0}^1\left(\int_0^rh\left(\frac{1+rt}{2}\right)\frac{(1-t)^{p-1}}{(1-tr)^p}\,dt\right)r^{p+1}\om(r)\,dr
		$$
and
		$$
		I_2(r)=\int_{0}^1\left(\int_r^1h\left(\frac{1+tr}{2}\right)\frac{(1-t)^{p-1}}{(1-tr)^p}\,dt\right)r^{p+1}\om(r)\,dr.
		$$
Fubini's theorem and $\widetilde{\om}\in\DD$ imply
		\begin{equation}
    \begin{split}\label{eq:m2}
    I_1(r)
		&\le\int_{0}^1\left(\int_0^rh\left(\frac{1+t}{2}\right)\frac{dt}{1-t}\right)\om(r)\,dr
		=\int_{0}^1h\left(\frac{1+t}{2}\right)\widetilde{\om}(t)\,dt\\ 
		&\lesssim\int_{0}^1h\left(\frac{1+t}{2}\right)\widetilde{\om}\left(\frac{1+t}{2}\right)\,dt
		\lesssim\int_{0}^1h\left(s\right)\widetilde{\om}(s)\,ds.
    \end{split}
    \end{equation}
Moreover, by using (iii) we deduce
		$$ 
		h\left(\frac{1+t}{2}\right)\widehat{\om}(t)
		\lesssim h\left(\frac{1+t}{2}\right)\widehat{\widetilde{\om}}(t)
		\asymp h\left(\frac{1+t}{2}\right)\widehat{\widetilde{\om}}\left(\frac{1+t}{2}\right)
		\le\int_{\frac{1+t}{2}}^1h(r)\widetilde{\om}(r)\,dr\to 0,\quad t\to 1^-.
		$$        
Therefore an integration by parts, (iii) and the fact that $\widetilde{\om}\in\DD$ yield 
		\begin{equation}
		\begin{split}\label{eq:m3}
    I_2(r)
		&\le\int_{0}^1h\left(\frac{1+r}{2}\right)\left(\int_r^1(1-t)^{p-1}\,dt\right)\frac{\om(r)}{(1-r)^p}\,dr
		\asymp\int_{0}^1h\left(\frac{1+r}{2}\right)\om(r)\,dr\\ 
		&=h\left(\frac{1}{2}\right)\widehat{\om}(0)
		+\int_{0}^1\frac{d}{dt} h\left(\frac{1+t}{2}\right)\widehat{\om}(t)\,dt\\ 
		&\lesssim h\left(\frac{1}{2}\right)\widehat{\widetilde{\om}}(0)
		+\int_{0}^1\frac{d}{dt}h\left(\frac{1+t}{2}\right)\widehat{\widetilde{\om}}(t)\,dt
    \lesssim\int_{0}^1h\left(t\right)\widetilde{\om}(t)\,dt.
     \end{split}
    \end{equation}
By combining \eqref{eq:m1}, \eqref{eq:m2} and \eqref{eq:m3}, we finally deduce \eqref{maxtilde}. This finishes the proof of the theorem.
\end{proof}

\begin{Prf}{\em{ Theorem~\ref{th:W2nuevo}. }}
If $\om\in\DD$, then
    $$
    \widehat{\widetilde{\om}}(r)\lesssim\int_r^1\widetilde\om\left(\frac{1+s}{2}\right)\,ds=2\widehat{\widetilde{\om}}\left(\frac{1+r}{2}\right),\quad0\le r<1,
    $$
and hence $\widetilde{\om}\in\DD$. Therefore $\|f\|_{A^p_\om}\lesssim\|f\|_{A^p_{\widetilde{\om}}}$ for all $f\in\H(\D)$, by Theorem~\ref{th:1}. This together M.~Riesz theorem \cite[Theorem~4.1]{Duren} gives the claim of the theorem for $p>1$.

To deal with the case $0<p\le1$, assume without loss of generality that $f(0)=0$. 
We will employ an argument similar to that used in the proof Theorem~\ref{th:1}. However, the present situation yields more involved considerations because the inequality \eqref{eq:derivativemaximalu} is replaced by the estimate
	\begin{equation}\label{eq:derivativeu}
	M^p_p(r,f')\le C R^{-p-1} \int_{r-R}^{r+R} M_p^p(\rho,\Real f)\,d\rho,\quad 0<R<r<R+r<1,\quad f\in\H(\D),
	\end{equation}
which can be found in \cite[Lemma~5.3]{PavP}, and where $C=C(p)>0$ is a constant. 

Let $r_0=1/\sqrt{3}$ and $t_0\in\left(\frac{2+r_0}{3},1\right)$. Apply \eqref{eq:DpHp}, \eqref{eq:derivativeu} and Fubini's theorem to obtain 
    \begin{equation*}
    \begin{split}
    \|f\|^p_{A^p_\om}
    &\asymp\int_{r_0}^1M^p_p(r,f)\om(r)r\,dr
    \lesssim\int_{r_0}^1\left(\int_0^1M_p^p(rt,f')(1-t)^{p-1}\,dt\right)\om(r)r^{p+1}\,dr\\
    &\asymp\int_{r_0}^1\left(\int_{t_0}^1M_p^p(rt,f')(1-t)^{p-1}\,dt\right)\om(r)\,dr\\
    &=\int_{t_0}^1\left( \int_{r_0}^1M_p^p(rt,f')\om(r)\,dr\right)(1-t)^{p-1}\,dt\\
    &\lesssim\int_{t_0}^1\left(\int_{r_0}^1
    \left(\int_{\frac{3tr-1}{2}}^{\frac{1+tr}{2}}M_p^p(s,\Real f)\,ds\right)\frac{\om(r)}{(1-tr)^{p+1}}dr\right)(1-t)^{p-1}\,dt\\
    &=I_1(f)+I_2(f)+I_3(f),
    \end{split}
    \end{equation*}
where
    \begin{equation*}
    \begin{split}
    I_1(f)&=\int_{t_0}^1\left(\int_{\frac{3tr_0-1}{2}}^{\frac{1+tr_0}{2}}M_p^p(s,\Real f)\left(\int_{r_0}^{\frac{2s+1}{3t}} \frac{\om(r)}{(1-tr)^{p+1}}\,dr\right)ds\right)(1-t)^{p-1}\,dt,\\
    I_2(f)&=\int_{t_0}^1\left(\int_{\frac{1+tr_0}{2}}^{\frac{3t-1}{2}}M_p^p(s,\Real f)\left(\int_{\frac{2s-1}{t}}^{\frac{2s+1}{3t}}
    \frac{\om(r)}{(1-tr)^{p+1}}\,dr\right)ds\right)(1-t)^{p-1}\,dt,\\
    I_3(f)&=\int_{t_0}^1\left(\int_{\frac{3t-1}{2}}^{\frac{1+t}{2}}M_p^p(s,\Real f)\left(\int_{\frac{2s-1}{t}}^{1}
    \frac{\om(r)}{(1-tr)^{p+1}}\,dr\right)ds\right)(1-t)^{p-1}\,dt.
    \end{split}
    \end{equation*}
Since $\frac{1+tr_0}{2}\le\frac{1+r_0}{2}<1$ and $\frac{2s+1}{3t}\le\frac{2+r_0}{3t}\le\frac{2+r_0}{3t_0}<1$ for $s\le\frac{1+r_0}{2}$ and $t_0\le t<1$, we have
    \begin{equation*}
    \begin{split}
    I_1(f)
		&\le\int_{t_0}^1(1-t)^{p-1}\,dt\left(\int_{0}^{\frac{1+r_0}{2}}M_p^p(s,\Real f)\,ds\right)\left(\int_{0}^{\frac{2+r_0}{3t_0}} \frac{\om(r)}{(1-r)^{p+1}}\,dr\right)\\
    &\lesssim\int_0^{\frac{1+r_0}{2}}M_p^p(s,\Real f)\widetilde{\om}(s)s\,ds\le\|\Real f\|^p_{L^p_{\widetilde{\om}}}.
    \end{split}
    \end{equation*}
To estimate $I_2(f)$ and $I_3(f)$, observe first that in both cases $s\le\frac{1+t}{2}$, that is, $\frac{2s-1}{t}\le1$, and $\frac{2s-1}{t}\ge\min\{r_0,3-\frac2{t_0}\}>0$. By \cite[Lemma~2.1(ii)]{PelSum14}, there exists $\b=\b(\om)>0$ such that
    $$
    \omg\left( \frac{2s-1}{t}\right)\lesssim\omg(s)\frac{1}{t^\b}\left(\frac{t+1-2s}{1-s} \right)^\b\le \frac{2^\b}{t_0^\b}\omg(s),\quad t\ge t_0,\quad \frac{2s-1}{t}<s.
    $$
It follows that for $s$ and $t$ on the ranges of values appearing in $I_2(f)$ and $I_3(f)$ we have $\omg\left(\frac{2s-1}{t}\right)\lesssim\omg(s)$. Hence Fubini's theorem yields
  \begin{equation*}
  \begin{split}
  I_2(f)
	&\asymp\int_{t_0}^1\left(\int_{\frac{1+tr_0}{2}}^{\frac{3t-1}{2}}\frac{M_p^p(s,\Real f)}{(1-s)^{p+1}}
  \left(\omg\left(\frac{2s-1}{t}\right)-\omg\left(\frac{2s+1}{3t}\right)\right)ds\right)(1-t)^{p-1}\,dt\\
  &\lesssim\int_{t_0}^1\left(\int_{0}^{\frac{3t-1}{2}}\frac{M_p^p(s,\Real f)}{(1-s)^{p+1}}\omg(s)\,ds\right)(1-t)^{p-1}\,dt\\
  &\le\int_0^1M_p^p(s,\Real f)\left(\int_{\frac{2s+1}{3}}^1(1-t)^{p-1}\,dt\right)\frac{\omg(s)}{(1-s)^{p+1}}ds\\
  &\asymp\int_0^1M_p^p(s,\Real f)\widetilde{\om}(s)\,ds
  \lesssim\|\Real f\|^p_{L^p_{\widetilde{\om}}}.
  \end{split}
  \end{equation*}
As for the remaining term, by using $\widehat{\om}\left(\frac{2s-1}{t}\right)\lesssim\widehat{\om}(s)$ and Fubini's theorem,
    \begin{equation*}
    \begin{split}
    I_3(f)
    &\le\int_{t_0}^1\left(\int_{\frac{3t-1}{2}}^{\frac{1+t}{2}} M_p^p(s,\Real f)\omg\left(\frac{2s-1}{t} \right)\,ds\right)\frac{dt}{(1-t)^{2}}\\
    &\lesssim\int_{t_0}^1\left(\int_{\frac{3t-1}{2}}^{1} M_p^p(s,\Real f)\omg\left(s\right)\,ds\right)\frac{dt}{(1-t)^{2}}\\
    &=\int_{\frac{3t_0-1}{2}}^{1}M_p^p(s,\Real f)\widehat\om\left(s\right)\left(\int_{t_0}^{\frac{2s+1}{3}}\frac{dt}{(1-t)^2}\right)ds
    \lesssim\|\Real f\|^p_{L^p_{\widetilde{\om}}}.
   \end{split}\end{equation*}
By combining the estimates for $I_1(f)$, $I_2(f)$ and $I_3(f)$ we deduce the claim.
\end{Prf}

\medskip

Theorem~\ref{th:Wintro} is obtained in the following result.

\begin{theorem}\label{th:W}
Let $0<p<\infty$ and let $\om$ be a radial weight. Then the following statements are equivalent:
    \begin{enumerate}
     \item[\rm(i)] $\|f\|^p_{A^p_\om}\asymp\int_0^1 \sup_{0<s< r}M^p_p(s,\Real f)\,  \widetilde{\om}(r)\,dr\asymp \int_0^1 \sup_{0<s< r}M^p_p(s,\Real f)\, \om(r)\,dr$ for all $f\in\H(\D)$;
    \item[\rm(ii)] $\|f\|_{A^p_\om}\asymp\|\Real f\|_{L^p_{\widetilde{\om}}}$ for all $f\in\H(\D)$;
     \item[\rm(iii)] $\|f\|_{A^p_\om}\asymp\|f\|_{A^p_{\widetilde{\om}}}$ for all $f\in\H(\D)$;
    \item[\rm(iv)] $\widetilde{\om}\in\DDD$;
    \item[\rm(v)] $\om\in\DDD$.
    \end{enumerate}
\end{theorem}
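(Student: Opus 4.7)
The approach is to prove all five statements equivalent by establishing the cycle $(\mathrm{v}) \Rightarrow (\mathrm{i}),(\mathrm{ii}),(\mathrm{iii}) \Rightarrow (\mathrm{iv}) \Rightarrow (\mathrm{v})$, with the main technical ingredient being the tail comparison $\widehat\om \asymp \widehat{\widetilde\om}$, which under a $\DDD$-hypothesis bridges norm conditions on $\om$ and on $\widetilde\om$, and in itself constitutes a new description of $\DDD$.

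First I would prove the equivalence (iv) $\Leftrightarrow$ (v), which is arguably the central new contribution. If $\om\in\DD$ then splitting $\widehat{\widetilde\om}(r) = \int_r^1 \widehat\om(s)/(1-s)\,ds$ on $[r,(1+r)/2]$ and applying the doubling of $\widehat\om$ gives $\widehat{\widetilde\om}(r) \ge \log(2)\,\widehat\om((1+r)/2) \gtrsim \widehat\om(r)$, while if $\om\in\Dd$ then iterating the defining inequality partitions $(r,1)$ into intervals $[r_{n-1},r_n]$ with $1-r_n = (1-r)/K^n$ on which $\widehat\om(s)/(1-s) \le C^{-(n-1)}K^n\widehat\om(r)/(1-r)$, summing via the geometric series to $\widehat{\widetilde\om}(r) \lesssim \widehat\om(r)$. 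Hence $\om\in\DDD$ forces $\widehat\om \asymp \widehat{\widetilde\om}$, from which $\widetilde\om$ inherits both $\DD$ and $\Dd$. Conversely, $\widetilde\om\in\DDD$ gives $\widehat\om \lesssim \widehat{\widetilde\om}$ via Theorem~\ref{th:1}, and $\widetilde\om\in\Dd$ combined with the simple estimate $\widehat{\widetilde\om}(t)(1-1/C) \le \int_t^{1-(1-t)/K}\widehat\om(s)/(1-s)\,ds \le \widehat\om(t)\log K$ gives the reverse comparison, so $\om\in\DDD$ follows again by inheritance.

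Assuming (v), the comparison $\widehat\om \asymp \widehat{\widetilde\om}$ plugged into the integration-by-parts identity \eqref{8n} yields (iii). The upper bounds in (i) and (ii) are then given respectively by Theorem~\ref{th:maximalmedias} (through Theorem~\ref{th:1}) and Theorem~\ref{th:W2nuevo}. The matching lower bounds are immediate: $\|\Real f\|_{L^p_{\widetilde\om}} \le \|f\|_{A^p_{\widetilde\om}} \asymp \|f\|_{A^p_\om}$ handles (ii), and the monotonicity of $M_p(\cdot,f)$ for $f\in\H(\D)$, which gives $\sup_{0<s<r} M_p^p(s,\Real f) \le M_p^p(r,f)$, handles (i) after integration against $\widetilde\om$ or $\om$ and use of (iii).

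To close the cycle, I would show (ii) $\Rightarrow$ (iii) $\Rightarrow$ (iv), with the analogous (i) $\Rightarrow$ (iii) treated in the same spirit (using monotonicity of $M_p(\cdot,f)$ and the trivial bound $\sup_{0<s<r}M_p^p(s,\Real f)\ge M_p^p(r,\Real f)$). For (ii) $\Rightarrow$ (iii), applying (ii) also to $if$ gives $\|f\|_{A^p_\om} \asymp \|\Imag f\|_{L^p_{\widetilde\om}}$, and combining with the quasi-triangle inequality $|f|^p \lesssim_p |\Real f|^p + |\Imag f|^p$ produces $\|f\|_{A^p_{\widetilde\om}} \lesssim \|f\|_{A^p_\om}$, while the reverse uses $\|\Real f\|_{L^p_{\widetilde\om}} \le \|f\|_{L^p_{\widetilde\om}}$ and (ii) once more. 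For (iii) $\Rightarrow$ (iv), the upper direction of (iii) gives $\widetilde\om\in\DD$ directly from Theorem~\ref{th:1}, while testing the lower direction with the monomials $m_n(z)=z^n$ yields the moment inequality $\widetilde\om_{np+1} \lesssim \om_{np+1}$; the conclusion $\widetilde\om\in\Dd$ is then extracted by invoking the moment descriptions of $\Dd$ from \cite[(2.27)]{PelaezRattya2019} in conjunction with the just-established $\widetilde\om\in\DD$. This last deduction is where I expect the main difficulty: unlike $\DD$, which is an upper-doubling condition visible in moment tests, $\Dd$ is a lower-growth condition on $\widehat\om$ and is not directly exposed by monomial testing alone, so the subtler moment characterization of $\DDD$ advertised in the introduction must be brought to bear to convert the inequality $\widetilde\om_x \lesssim \om_x$ into the required geometric decay.
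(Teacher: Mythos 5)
Your proposal is correct, and its skeleton coincides with the paper's: reduce everything to the tail comparison $\widehat{\om}\asymp\widehat{\widetilde{\om}}$ under a $\DDD$-hypothesis, invoke Theorems~\ref{th:maximalmedias} and~\ref{th:W2nuevo} for the sufficiency, and test the norm hypotheses for the necessity. Several individual implications are, however, handled by genuinely different means. For $(\mathrm{iv})\Leftrightarrow(\mathrm{v})$ you work entirely at the level of tails: you get $\widehat{\widetilde{\om}}\lesssim\widehat{\om}$ from $\Dd$ by partitioning $(r,1)$ into the intervals with endpoints $1-(1-r)K^{-n}$ and summing a geometric series, and then transfer $\DD$ and $\Dd$ between $\om$ and $\widetilde{\om}$ by inheritance from $\widehat{\om}\asymp\widehat{\widetilde{\om}}$; the paper instead obtains $\widehat{\widetilde{\om}}\lesssim\widehat{\om}$ from the exact identity $\int_r^1\widehat{\om}\bigl(1-\tfrac{1-s}{K}\bigr)\tfrac{ds}{1-s}=\widehat{\widetilde{\om}}\bigl(1-\tfrac{1-r}{K}\bigr)$, proves $\om\in\DD$ through $\widehat{\om}=\widetilde{\om}_{[1]}$ and the moment relation $(\widetilde{\om}_{[\b]})_x\asymp x^{-\b}\widetilde{\om}_x$, and proves $\om\in\Dd$ by an explicit choice of a large $M>K$. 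Both routes are valid; note only that your inheritance of $\Dd$ requires one iteration of the defining inequality (or the power-type description \eqref{6}) to absorb the comparison constants. For $(\mathrm{ii})\Rightarrow(\mathrm{iii})$ and $(\mathrm{i})\Rightarrow(\mathrm{iii})$ your trick of applying the hypothesis to $if$ and using $|f|^p\lesssim|\Real f|^p+|\Imag f|^p$ is cleaner than the paper's computation $\|\Real m_n\|^p_{L^p_{\widetilde{\om}}}\asymp\widetilde{\om}_{np+1}$ via $\int_{-\pi}^{\pi}|\cos(n\t)|^p\,d\t\asymp1$, and it bypasses the monomial test for $\Dd$ entirely in the case of (ii) and (i). The one imprecision is in $(\mathrm{iii})\Rightarrow(\mathrm{iv})$: the reference \cite[(2.27)]{PelaezRattya2019} is a tail, not a moment, description of $\Dd$ and does not by itself digest the inequality $\widetilde{\om}_x\lesssim x(\widetilde{\om}_{[1]})_x$; what is needed, and what the paper uses, is the moment characterization of $\DDD$ from \cite[(1.2), (1.3) and Theorem~3]{PelaezRattya2019}, which combined with the reverse inequality supplied by $\widetilde{\om}\in\DD$ gives $x(\widetilde{\om}_{[1]})_x\asymp\widetilde{\om}_x$ and hence $\widetilde{\om}\in\DDD$. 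Since you correctly flag that a moment characterization of $\DDD$ must be brought in at exactly this point, this is a citation slip rather than a gap.
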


\begin{proof}
We first show that (iii), (iv) and (v) are equivalent. Assume (v). Since $\om\in\DDD\subset\Dd$, there exist $C=C(\om)>0$ and $\b=\b(\om)>0$ such that
    \begin{equation}\label{6}
    \begin{split}
    \widehat{\om}(r)\le C\left(\frac{1-r}{1-t}\right)^{\b}\widehat{\om}(t),\quad 0\le t\le r<1,
    \end{split}
    \end{equation}
by \cite[(2.27)]{PelaezRattya2019}. Therefore $\widehat{\om}(r)\lesssim(1-r)^\b$ for all $0\le r<1$, and hence $\widetilde{\om}$ is a weight. Moreover, by the definition of $\Dd$, there exist $K=K(\om)>1$ and $C=C(\om)>1$ such that
    $$
    \widehat{\widetilde{\om}}(r)
    \ge C\int_r^1\frac{\omg\left(1-\frac{1-s}{K}\right)}{1-s}\,ds
    =C\widehat{\widetilde{\om}}\left(1-\frac{1-r}{K}\right),\quad 0\le r<1,
    $$
and hence
    \begin{equation*}
    \begin{split}
    \widehat{\widetilde{\om}}(r)
    &\le\int_r^{1-\frac{1-r}{K}}\widetilde{\om}(s)\,ds+\frac{\widehat{\widetilde{\om}}(r)}{C}
    \le\omg(r)\log K+\frac{\widehat{\widetilde{\om}}(r)}{C},\quad 0\le r<1.
    \end{split}
    \end{equation*}
It follows that $\widehat{\widetilde{\om}}(r)\lesssim\omg(r)$ for all $0\le r<1$. On the other hand, since $\om\in\DDD\subset\DD$, we have
	$$
	\widehat{\om}(r)
	\le C \widehat{\om}\left(\frac{1+r}{2}\right)
	\le\frac{C}{\log 2}\int_{r}^{\frac{1+r}{2}}\frac{\widehat{\om}(s)}{1-s}\,ds
	\le\frac{C}{\log 2}\widehat{\widetilde{\om}}(r),\quad 0\le r<1.
	$$
Consequently, 	
		\begin{equation}\label{eq:gorros}
		\widehat{\widetilde{\om}}(r)\asymp\omg(r), \quad 0\le r<1.
		\end{equation}
Arguing now as in \eqref{8n} we deduce (iii). An alternative way to deduce (iii) from (v) is to first observe that $\om\in\DDD$ implies $\widetilde\om\in\DDD$, and then use Carleson measures \cite[Theorem~1]{PelRatEmb} together with \eqref{eq:gorros}. We omit the details of this alternative approach.

Assume next (iii). Then $\widetilde\om$ must be a weight because $\om$ is.
By testing (iii) with the monomials~$m_n$, we deduce $\om_{np+1}\asymp\widetilde{\om}_{np+1}$, from which an integration by parts on the left yields $(np+1)(\widetilde{\om}_{[1]})_{np}\asymp \widetilde{\om}_{np+1}$ for all $n\in\N\cup\{0\}$. By choosing $np\le x<np+1$ we obtain $x(\widetilde{\om}_{[1]})_{x}\asymp \widetilde{\om}_{x}$ for all $0\le x<\infty$. By using now \cite[(1.2), (1.3), Theorem~3]{PelaezRattya2019}, we deduce $\widetilde{\om}\in\DDD$, and thus (iv) holds.

Assume (iv).  Since $\widetilde\om\in\DDD$ %=\M\cap\DD=\Dd\cap\DD$
 by \cite[Theorem~3]{PelaezRattya2019}, \cite[(1.2) and (1.3)]{PelaezRattya2019} yield $\left(\widetilde\om_{[\b]}\right)_x\asymp x^{-\b}\widetilde\om_x$ for all $x\ge1$ and $\b>0$. Therefore \cite[Lemma~2.1 (ix)]{PelSum14} yields
	\begin{equation*}
	\begin{split}
	\left(\widetilde\om_{[\b]}\right)_x
	\asymp\frac{\widetilde\om_x}{x^{\b}}
	\lesssim\frac{\widetilde\om_{2x}}{(2x)^{\b}}
	\asymp\left(\widetilde\om_{[\b]}\right)_{2x},\quad x\ge1,
	\end{split}
	\end{equation*}
and thus $\widetilde\om_{[\b]}\in\DD$ by \cite[Lemma~2.1]{PelSum14}. In particular, by choosing $\b=1$, we deduce $\widehat{\om}\in\DD$. Since $(x+1)\om_x=\widehat{\om}_{x+1}$ for $x>0$, \cite[Lemma~2.1(ix)]{PelSum14} implies $\om\in\DD$. Further, since $\widetilde\om\in\Dd$, there exist $K=K(\om)>1$ and $C=C(\om)>1$ such that
	$$
	\int_{r}^{1-\frac{1-r}{K}}\widetilde\om(s)\,ds\ge (C-1)\int_{1-\frac{1-r}{K}}^1\widetilde\om(s)\,ds,\quad 0\le r<1.
	$$
Therefore, for $M>K$, we have
	\begin{equation*}
	\begin{split}
	\widehat{\om}(r)\log K 
	&\ge\int_{r}^{1-\frac{1-r}{K}}\widetilde\om(s)\,ds
	\ge(C-1)\int_{1-\frac{1-r}{K}}^1\widetilde\om(s)\,ds
	\ge(C-1)\int_{1-\frac{1-r}{K}}^{1-\frac{1-r}{M}}\widetilde\om(s)\,ds\\
	&\ge(C-1)\widehat{\om}\left(1-\frac{1-r}{M}\right)\log \frac{M}{K},\quad 0\le r<1.
	\end{split}
	\end{equation*}
By choosing $M$ large enough such that $(C-1)\log\frac{M}{K}>\log K$, it follows that $\om\in\Dd$. Thus $\om\in\DDD$, that is, (v) holds.
Therefore we have shown that (iii), (iv) and (v) are equivalent.

Assume now (ii). Then $\widetilde\om$ must be a weight because $\om$ is. Moreover, by testing (ii) with the monomials $m_n$, we obtain
    \begin{equation*}
    \begin{split}
    \om_{np+1}\asymp\|\Real m_n\|^p_{L^p_{\widetilde{\om}}}
    &\asymp\int_{0}^1 r^{np+1}\widetilde{\om}(r)\left(\int_{-\pi}^\pi|\cos(n\theta)|^p\,d\theta\right)dr
    \asymp\widetilde{\om}_{np+1},\quad n\in\N\cup\{0\}.
    \end{split}
    \end{equation*}
By arguing as in the proof above where we showed that (v) follows from (iii), we deduce $\widetilde{\om}\in\DDD$. 

Conversely, if $\widetilde{\om}\in\DDD$, then also $\om\in\DDD$, and (iii) is satisfied by the first part of the proof. Therefore we may use Theorem~\ref{th:W2nuevo} and (iii) to obtain 
    $$
    \|f\|_{A^p_\om}
    \lesssim\|\Real f\|_{L^p_{\widetilde{\om}}}
    \le\|f\|_{A^p_{\widetilde{\om}}}
    \asymp\|f\|_{A^p_{\om}}, \quad f\in \H(\D).
    $$
Thus (ii) is satisfied. Hence we have shown that (ii)-(v) are equivalent.

It remains to associate (i) with the other conditions. This is achieved by using the results we have already proved along with techniques similar to those used in the proofs above. Therefore we only indicate the proofs and omit the details. If (i) is satisfied, then by testing with the monomials $m_n$ we deduce $\widetilde{\om}\in\DDD$. Conversely, if $\widetilde{\om}\in\DDD$, then Theorem~\ref{th:maximalmedias} and an integration by parts similar to that in \eqref{8n} together with \eqref{eq:gorros} give 
		\begin{equation*}
		\begin{split}
		\|f\|^p_{A^p_\om}
		\lesssim\int_0^1\sup_{0<s<r}M^p_p(s,\Real f)\widetilde{\om}(r)\,dr
		\asymp\int_0^1\sup_{0<s<r}M^p_p(s,\Real f){\om}(r)\,dr
		\lesssim\|f\|^p_{A^p_\om},\quad f\in\H(\D).
		\end{split}
		\end{equation*}
This finishes the proof of the theorem.
\end{proof}

\section{Counterexamples}\label{sec:3}

In this section we will prove Theorems~\ref{th:count1} and~\ref{pr:count} by constructing radial weights with desired properties. The first construction shows that there exist weights $\om\not\in\DD$ such that $\widetilde{\om}\in\DD$. The second one illustrates the fact that even if $\widetilde\om$ is differentiable almost everywhere whenever it is a weight, the validity of the inequality \eqref{maxtilde} is equally much related to the regularity of the weight $\om$ as to its growth/decay.

Before proving the above mentioned two results, we will show, by using Theorem~\ref{th:count1}, that the class $\DD$ is not closed by multiplication by $(1-|z|)^\beta$ for any $\beta>0$. This indicates that despite of its innocent definition, the class $\DD$ has in a sense complex nature. 

\begin{proposition}\label{pr:count1}
The implication $\om\in\DD\Rightarrow\om_{[\b]}\in\DD$ is in general false for each $\b>0$.
\end{proposition}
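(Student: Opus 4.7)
The plan is to use Theorem~\ref{th:count1} to produce a $\DD$-weight whose $(1-r)^\b$-shift falls outside $\DD$. Let $\om$ be the weight from Theorem~\ref{th:count1}, so $\om\notin\DD$ while $\nu:=\widetilde\om\in\DD$ is itself a weight. Since
\[
\nu_{[\b]}(r)=(1-r)^\b\widetilde\om(r)=(1-r)^{\b-1}\widehat\om(r),
\]
it suffices to show $\nu_{[\b]}\notin\DD$ for every $\b>0$; then $\nu$ itself provides the counterexample.

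The clean case is $\b=1$, where $\nu_{[1]}=\widehat\om$ and an integration by parts gives the scalar identity $(x+1)(\widehat\om)_x=\om_{x+1}$ for every $x\ge 0$. I would then invoke the moment characterization $\mu\in\DD\iff\mu_x\asymp\mu_{2x}$ from \cite[Lemma~2.1(ix)]{PelSum14}, already exploited in the proof of Theorem~\ref{th:W} (step (iii)$\Rightarrow$(v)). Assuming $\widehat\om\in\DD$, this characterization combined with the identity forces $\om_{x+1}\asymp\om_{2x+1}$. Coupled with the elementary estimate $\om_y\asymp\om_{y+1}$ for $y$ large---valid for any radial weight since $\int_0^{1/2}r^y\om\,dr=O(2^{-y})$ while $\om_{y+1}\ge\tfrac12\int_{1/2}^1 r^y\om\,dr$---this upgrades to $\om_y\asymp\om_{2y}$, i.e.\ $\om\in\DD$, contradicting the choice of $\om$. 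Hence $\nu_{[1]}\notin\DD$.

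For general $\b>0$ the same scheme should work, but the translation between moments of $\nu_{[\b]}$ and moments of $\om$ becomes non-trivial. Writing
\[
(\nu_{[\b]})_x=\int_0^1\om(s)\,B_s(x)\,ds,\qquad B_s(x)=\int_0^s r^x(1-r)^{\b-1}\,dr,
\]
by Fubini, the inner incomplete-beta integral $B_s(x)$ concentrates on the window $s\in[1-O(1/x),1]$ with $B_s(x)\asymp x^{-\b}$ there, while being negligible on $[0,1-C/x]$. A Laplace-style estimate then compares $(\nu_{[\b]})_x$ to $x^{-\b}$ times a moment of $\om$ at an index of order $x$, after which the doubling hypothesis $(\nu_{[\b]})_x\asymp(\nu_{[\b]})_{2x}$ and the finishing step of the case $\b=1$ force $\om\in\DD$, the same contradiction. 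The main obstacle is precisely this comparison: since $\om\notin\DD$, the tail $\widehat\om$ may fluctuate irregularly on scale $1/x$, so one must verify that averaging against the Beta density preserves rather than smooths away this irregularity; the case $\b=1$ sidesteps the difficulty because $B_s(x)$ collapses to the scalar factor $s^{x+1}/(x+1)$ and the moment identity is exact.
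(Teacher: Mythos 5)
Your argument for $\b=1$ is sound and coincides with the paper's endgame: the identity $(x+1)(\widehat\om)_x=\om_{x+1}$ together with the moment characterization of $\DD$ from \cite[Lemma~2.1(ix)]{PelSum14} shows $\widehat\om\in\DD\Leftrightarrow\om\in\DD$, so $\nu_{[1]}=\widehat\om\notin\DD$. But the proposition asserts failure for \emph{each} $\b>0$, and for $\b\ne1$ your proposal has a genuine gap which you yourself flag. Two distinct problems arise. First, for $0<\b<1$ the incomplete-Beta kernel $B_s(x)$ spreads its mass over roughly $1/\b$ dyadic generations near $s=1$, so the averaging can in principle smooth away the (very mild, $\log\log\log$-order) failure of doubling of $\widehat\om$; it is not clear that $\widetilde\om_{[\b]}\notin\DD$ for this particular $\om$, and no estimate in your sketch rules out that it \emph{is} doubling. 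Second, and more structurally, your reduction ``it suffices to show $\nu_{[\b]}\notin\DD$ for every $\b>0$ for this one $\nu$'' is a stronger claim than the proposition requires: one only needs, for each $\b$, \emph{some} $\DD$-weight whose shift leaves $\DD$, and the witnessing weight may depend on $\b$.

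The paper exploits exactly this freedom and avoids all Laplace-type analysis. It first records an elementary lemma: if $\nu\in\DD$ and $\psi$ is positive and \emph{increasing} with $\psi\nu$ a weight, then $\psi\nu\in\DD$ (pull $\psi(\frac{1+r}{2})$ out of both sides of the doubling inequality). Then it argues by contradiction: if $\nu\in\DD\Rightarrow\nu_{[\b]}\in\DD$ held for some fixed $\b>0$ and all $\nu$, iterate it $n$ times starting from $\nu=\widetilde\om$ with $n\b\ge1$ to obtain $\widehat\om_{[n\b-1]}\in\DD$; since $(1-r)^{1-n\b}$ is increasing, the lemma gives $\widehat\om\in\DD$, hence $\om\in\DD$ --- the desired contradiction. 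Note that the counterexample weight is then one of $\widetilde\om,\widetilde\om_{[\b]},\dots,\widetilde\om_{[(n-1)\b]}$, not necessarily $\widetilde\om$ itself. If you want to salvage your direct approach, the increasing-multiplier lemma alone closes the case $\b\ge1$ (write $\widehat\om=\widetilde\om_{[\b]}\cdot(1-r)^{1-\b}$ with $(1-r)^{1-\b}$ increasing), but for $0<\b<1$ you would still need the iteration device or a genuinely new computation.
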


\begin{proof}
First, let us observe that for each $\nu\in\DD$ and an increasing function $\psi:[0,1)\to(0,\infty)$, with $\psi(z)=\psi(|z|)$ for all $z\in\D$, such that $\psi\nu$ is a weight, we have $\psi\nu\in\DD$. Indeed, since $\nu\in\DD$ there exists a constant $C=C(\nu)>1$ such that $\widehat{\nu}(r)\le C\widehat{\nu}\left(\frac{1+r}{2}\right)$ for all $0\le r<1$. This is equivalent to
	\begin{equation*}
	\begin{split}
	\int_r^{\frac{1+r}{2}}\nu(s)\frac{\psi(s)}{\psi(s)}\,ds\le\left(C-1\right)\int_{\frac{1+r}{2}}^1\nu(s)\frac{\psi(s)}{\psi(s)}\,ds,
	\end{split}
	\end{equation*}
which implies
	\begin{equation*}
	\begin{split}
	\frac1{\psi\left(\frac{1+r}{2}\right)}\int_r^{\frac{1+r}{2}}\nu(s)\psi(s)\,ds
	\le\frac{C-1}{\psi\left(\frac{1+r}{2}\right)}\int_{\frac{1+r}{2}}^1\nu(s)\psi(s)\,ds,
	\end{split}
	\end{equation*}
that is, $\widehat{\psi\nu}(r)\le C\widehat{\psi\nu}\left(\frac{1+r}{2}\right)$ for all $0\le r<1$. Thus $\psi\nu\in\DD$.

Next, assume on the contrary to the statement that $\nu\in\DD$ implies $\nu_{[\b]}\in\DD$ for some $\b>0$ and all $\nu\in\DD$. Pick $n\in\N$ such that $n\b\ge1$, and let $\om$ be the weight of the statement of Theorem~\ref{th:count1}. Then $n$ applications of the antithesis to $\nu=\widetilde\om$ imply $\widehat\om_{[n\b-1]}\in\DD$. Further, the previous obervation with $\psi(z)=(1-|z|)^{1-n\b}$ gives $\widehat{\om}\in\DD$, which is equivalent to $\om\in\DD$ by Fubini's theorem and \cite[Lemma~2.1]{PelSum14}(ix).
This is a contradiction.
\end{proof}

\begin{Prf}{\em{Theorem~\ref{th:count1}.}}
Let $t_n=1-e^{-n}$ for all $n\in\N$, $\vp:\N\setminus\{1\}\to(0,1/2)$ such that $\vp\in\ell^1$, and define $s_n\in(t_n,t_{n+1})$ by $\frac{1-s_n}{1-t_{n+1}}=e^{\vp(n)}$ for all $n\in\N$. Further, let $\om(s)=\sum_{n=2}^\infty\frac{\chi_{[s_n,t_{n+1}]}(s)}{1-s}$ for all $0\le s<1$. The function $\vp$ will be appropriately fixed later. Then
	$$
	\widehat{\om}(0)
	=\sum_{n=2}^\infty\int_{s_n}^{t_{n+1}}\frac{ds}{1-s}
	=\sum_{n=2}^\infty\log\frac{1-s_n}{1-t_{n+1}}
	=\sum_{n=2}^\infty\vp(n)<\infty,
	$$
and thus $\om$ is a radial weight. Moreover,
	\begin{equation}\label{eq:tilde}
	\begin{split}
	\widehat{\om}(r)=\left\{
        \begin{array}{ll}
        \sum_{k=n}^\infty\vp(k), & \quad t_n\le r\le s_n \\
        \sum_{k=n+1}^\infty\vp(k)+\log\frac{1-r}{1-t_{n+1}}, & \quad s_n\le r\le t_{n+1}
        \end{array}\right.,\quad n\ge 2.
	\end{split}
	\end{equation}
Now that $\om\in\DD$ if and only if there exists $C>1$ such that $\widehat{\om}(r)\le C\widehat{\om}(1-\frac{1-r}{e})$ for all $0\le r<1$, we deduce $\om\not\in\DD$ if
	\begin{equation}\label{requirement1}
    \limsup_{n\to \infty}\frac{\sum_{k=n+1}^\infty\vp(k)}{\vp(n)}=\infty
	%\vp(n)\not\lesssim\sum_{k=n+1}^\infty\vp(k),\quad n\ge 2.
	\end{equation}
This is our first requirement for $\vp$. 

We now proceed to consider $\widehat{\widetilde{\om}}$. If $k\ge 2$ and $t_k\le t\le s_k$, then \eqref{eq:tilde} yields
	\begin{equation*}
	\begin{split}
	\widehat{\widetilde{\om}}(t)
	&=\int_t^{s_k}\frac{\widehat{\om}(r)}{1-r}\,dr
	+\sum_{n=k}^\infty\int_{s_n}^{t_{n+1}}\frac{\widehat{\om}(r)}{1-r}\,dr
	+\sum_{n=k}^\infty\int_{t_{n+1}}^{s_{n+1}}\frac{\widehat{\om}(r)}{1-r}\,dr\\
	&=\log\frac{1-t}{1-s_k}\sum_{n=k}^\infty\vp(n)
	+\sum_{n=k}^\infty\left(\log\frac{1-s_n}{1-t_{n+1}}\sum_{j=n+1}^\infty\vp(j)
	+\int_{s_n}^{t_{n+1}}\left(\log\frac{1-r}{1-t_{n+1}}\right)\frac{dr}{1-r}\right)\\
	&\quad+\sum_{n=k}^\infty\left(\log\frac{1-t_{n+1}}{1-s_{n+1}}\sum_{j=n+1}^\infty\vp(j)\right)\\
	&=\log\frac{1-t}{1-s_k}\sum_{n=k}^\infty\vp(n)\\ 
	&\quad+\sum_{n=k}^\infty\left(\log\frac{1-s_n}{1-t_{n+1}}\sum_{j=n+1}^\infty\vp(j)
    +\log\frac{1}{1-t_{n+1}}\log\frac{1-s_n}{1-t_{n+1}}
    +\int_{s_n}^{t_{n+1}}\log(1-r)\frac{dr}{1-r}\right)\\ 
		&\quad+\sum_{n=k}^\infty\left(
    \log\frac{1-t_{n+1}}{1-s_{n+1}}\sum_{j=n+1}^\infty\vp(j)\right).
	\end{split}
	\end{equation*}
Since $\log\frac{1-s_n}{1-t_{n+1}}=\vp(n)$, $\log\frac{1-t_{n+1}}{1-s_{n+1}}=1-\vp(n+1)$ and
	\begin{equation*}
	\begin{split}
	2\int_{s_n}^{t_{n+1}}\log(1-r)\frac{dr}{1-r}
	&=\left(\log\frac1{1-s_n}\right)^2-\left(\log\frac1{1-t_{n+1}}\right)^2
	=\left(n+1-\vp(n)\right)^2-\left(n+1\right)^2\\
	&=\vp(n)^2-2(n+1)\vp(n),\quad n\ge 2,
	\end{split}
	\end{equation*}
we deduce
	\begin{equation*}
	\begin{split}
	\widehat{\widetilde{\om}}(t)
	&=\log\frac{1-t}{1-s_k}\sum_{n=k}^\infty\vp(n)
	+\sum_{n=k}^\infty\left(\vp(n)\sum_{j=n+1}^\infty\vp(j)\right)
	+\frac12\sum_{n=k}^\infty\vp(n)^2\\
	&\quad+\sum_{n=k}^\infty\left(\left(1-\vp(n+1)\right)\sum_{j=n+1}^\infty\vp(j)\right),\quad t_k\le t\le s_k,\quad k\ge 2.
	\end{split}
	\end{equation*}
Recall that $\widetilde{\om}\in\DD$ if and only if $\widehat{\om}\lesssim\widehat{\widetilde{\om}}$ on $[0,1)$ by Theorem~\ref{th:1}. Obviously, for $t_k\le t\le s_k$, we have
	\begin{equation*}
	\begin{split}
	\widehat{\widetilde{\om}}(t)
	&\ge\widehat{\widetilde{\om}}(s_k)
	=\sum_{n=k}^\infty\left(\left(\vp(n)+1-\vp(n+1)\right)\sum_{j=n+1}^\infty\vp(j)\right)
	+\frac12\sum_{n=k}^\infty\vp(n)^2\\
	&\ge\frac12\sum_{n=k}^\infty\left(\sum_{j=n+1}^\infty\vp(j)\right)
	\end{split}
	\end{equation*}
because the range of $\vp$ is contained in $(0,1/2)$. In view of \eqref{eq:tilde}, our second requirement for $\vp$ is
	\begin{equation}\label{requirement2}
	\sum_{n=k}^\infty\vp(n)\lesssim\sum_{n=k}^\infty\left(\sum_{j=n+1}^\infty\vp(j)\right),\quad k\ge 2.
	\end{equation}
If $k\ge 2$ and $s_k\le t\le t_{k+1}$, then \eqref{eq:tilde} yields
	\begin{equation*}
	\begin{split}
	\widehat{\widetilde{\om}}(t)
	&=\int_t^{t_{k+1}}\frac{\widehat{\om}(r)}{1-r}\,dr+\int_{t_{k+1}}^1\frac{\widehat{\om}(r)}{1-r}=\cdots\\
	&=\log\frac{1-t}{1-t_{k+1}}\sum_{n=k+1}^\infty\vp(n)+\frac{1}{2}\left(k+1-\log\frac1{1-t}\right)^2+\frac12\sum_{n=k+1}^\infty\vp(n)^2\\
	&\quad+\left(1-\vp(k+1)\right)\sum_{n=k+1}^\infty\vp(n)
	+\sum_{n=k+1}^\infty\left(\left(\vp(n)+1-\vp(n+1)\right)\sum_{j=n+1}^\infty\vp(j)\right)
	\end{split}
	\end{equation*}
and
	\begin{equation*}
	\begin{split}
	\widehat{\om}(t)
	&=\sum_{n=k+1}^\infty\vp(n)+\log\frac{1-t}{1-t_{k+1}}.
	\end{split}
	\end{equation*}
Now that the range of $\vp$ is contained in $(0,1/2)$, we have
	$$
	\left(1-\vp(k+1)\right)\sum_{n=k+1}^\infty\vp(n)\ge\frac12\sum_{n=k+1}^\infty\vp(n)
	$$
and
	$$
	\sum_{n=k+1}^\infty\left(\left(\vp(n)+1-\vp(n+1)\right)\sum_{j=n+1}^\infty\vp(j)\right)
	\ge\frac12\sum_{n=k+1}^\infty\left(\sum_{j=n+1}^\infty\vp(j)\right).
	$$
Hence
	$$
	\widehat{\widetilde{\om}}(t)
	\ge\frac12\sum_{n=k+1}^\infty\vp(n)
	+\frac12\sum_{n=k+1}^\infty\left(\sum_{j=n+1}^\infty\vp(j)\right),\quad s_k\le t\le t_{k+1}.
	$$
Moreover,
		$$
		\log\frac{1-t}{1-t_{k+1}}\le\log\frac{1-s_k}{1-t_{k+1}}=\vp(k),\quad s_k\le t\le t_{k+1}.
		$$
Therefore, to deduce $\widehat{\om}\lesssim\widehat{\widetilde{\om}}$ on $[s_k,t_{k+1}]$, it suffices to require
	\begin{equation}\label{requirement3}
	\vp(k)\lesssim\sum_{n=k+1}^\infty\left(\sum_{j=n+1}^\infty\vp(j)\right),\quad k\ge 2.
	\end{equation}
To complete the proof, it now remains to construct $\vp:\N\setminus\{1\}\to(0,1/2)$ such that $\vp\in\ell^1$ and \eqref{requirement1}, \eqref{requirement2} and \eqref{requirement3} are satisfied. Set
	\begin{equation}\label{eq:phi}
	\begin{split}
	\vp(n)=\left\{
        \begin{array}{ll}
        n^{-3}, & n\ne 2^{2^j},\\
        \frac{\log_2\log_2 n}{n^2}+n^{-3}, & n=2^{2^j}.
        \end{array}\right.,\quad n\ge2,
	\end{split}
	\end{equation}
so that obviously $\vp\in\ell^1$. Moreover,
	$$
	\vp(2^{2^j})=\frac{j}{2^{2^{j+1}}}+\frac1{2^{3\cdot2^j}}\ge\frac{j}{2^{2^{j+1}}},\quad j\in\N,
	$$
and
	$$
	\sum_{k=2^{2^j}+1}^\infty\vp(k)
	=\sum_{k=2^{2^j}+1}^\infty\frac1{k^3}+\sum_{k=j+1}^\infty\frac{k}{2^{2^{k+1}}}
	\asymp\frac1{2^{2^{j+1}}}+\frac{j}{2^{2^{j+2}}},\quad j\in\N,
	$$
and hence the first requirement \eqref{requirement1} for $\vp$ is satisfied. We also have \eqref{requirement2} and \eqref{requirement3} because
		$$
		\sum_{n=k}^\infty\vp(n)
		\lesssim\frac{\log_2\log_2 k}{k^2}+\frac1{k^2}
		\lesssim\frac1k
		\lesssim\sum_{n=k}^\infty\left(\sum_{j=n+1}^\infty\frac1{j^3}\right)
		\le\sum_{n=k}^\infty\left(\sum_{j=n+1}^\infty\vp(j)\right),\quad k\ge 4,
		$$
and
		$$
		\vp(k)
		\lesssim\frac{\log_2\log_2 k}{k^2}+\frac1{k^3}
		\lesssim\frac1k
		\lesssim\sum_{n=k+1}^\infty\left(\sum_{j=n+1}^\infty\frac1{j^3}\right)
		\le\sum_{n=k+1}^\infty\left(\sum_{j=n+1}^\infty\vp(j)\right),\quad k\ge 4.
		$$
Therefore $\vp$ has the desired properties. Thus $\om\not\in\DD$ but $\widetilde{\om}\in\DD$.
\end{Prf}

\medskip
\begin{Prf}{\em{Theorem~\ref{pr:count}.}}
We use the construction given in the proof of \cite[Theorem~14]{PelaezRattya2019}. Let $\psi:[1,\infty)\to(0,\infty)$ an increasing unbounded function, and let $\om(r)=\om_{\psi}(r)=\sum_{n=1}^\infty\chi_{[r_{2n+1},r_{2n+2}]}(r)$, where $r_x=1-\frac{1}{2^{x\psi(x)}}$ and $\psi$ satisfies $\psi(x+1)-\psi(x)\le C/x$ for all $x\ge1$. Then
    \begin{equation}\label{20'}
    \frac{1-r_x}{1-r_{x+1}}=\frac{2^{(x+1)\psi(x+1)}}{2^{x\psi(x)}}=2^{x(\psi(x+1)-\psi(x))+\psi(x+1)}\ge 2^{\psi(x+1)}\to\infty,\quad x\to\infty,
    \end{equation}
and hence
	\begin{equation}\label{17'}
    \frac{1-r_x}{1-r_{x+1}}\asymp2^{\psi(x+1)}\asymp2^{\psi(x)},\quad x\ge1.
    \end{equation}
We know that $\om\not\in\DD$ by the proof of \cite[Theorem~14]{PelaezRattya2019}. We will show next that $\widetilde{\om}\not\in\DD$. If $r_{2n}\le s<r_{2_{2n+1}}$, then 
	\begin{equation*}
	\begin{split}
	\widehat{\om}(s)
	&=\sum_{j=n}^\infty(r_{2j+2}-r_{2j+1})
	=\sum_{j=n}^\infty\left(\frac{1}{2^{(2j+1)\psi(2j+1)}}-\frac{1}{2^{(2j+2)\psi(2j+2)}}\right)\\
	&\asymp\sum_{j=n}^\infty\frac{1}{2^{(2j+1)\psi(2j+1)}}
	\asymp1-r_{2n+1},
	\end{split}
	\end{equation*}
and if $r_{2n+1}\le s<r_{2n+2}$, then
	\begin{equation*}
	\begin{split}
	\widehat{\om}(s)
	&=\sum_{j=n+1}^\infty(r_{2j+2}-r_{2j+1})+r_{2n+2}-s
	\asymp1-s+r_{2n+2}-r_{2n+3}.
	\end{split}
	\end{equation*}
Thus
	\begin{equation*}
	\begin{split}
	\widehat{\om}(s)\asymp\left\{
        \begin{array}{ll}
        1-r_{2j+1}, & \quad r_{2j}\le s\le r_{2j+1} \\
        1-s-(r_{2j+3}-r_{2j+2}), & \quad r_{2j+1}\le s\le r_{2j+2}
        \end{array}\right.,\quad j\in\N.
	\end{split}
	\end{equation*}
If $r_{2n}\le r\le r_{2n+1}$, then \eqref{17'} yields
	\begin{equation*}
	\begin{split}
	\widehat{\widetilde{\om}}(r)
	&\asymp\sum_{j=n+1}^\infty\int_{r_{2j}}^{r_{2j+1}}\frac{1-r_{2j+1}}{1-s}\,ds
	+\int_{r}^{r_{2n+1}}\frac{1-r_{2n+1}}{1-s}\,ds
  +\sum_{j=n}^\infty\int_{r_{2j+1}}^{r_{2j+2}}\left(1-\frac{r_{2j+3}-r_{2j+2}}{1-s}\right)\,ds\\
	&=\sum_{j=n+1}^\infty\left(1-r_{2j+1}\right)\log\frac{1-r_{2j}}{1-r_{2j+1}}
	+(1-r_{2n+1})\log\frac{1-r}{1-r_{2n+1}}\\
  &\quad+\sum_{j=n}^\infty\left(r_{2j+2}-r_{2j+1}\right)
	\left(1-\frac{r_{2j+3}-r_{2j+2}}{r_{2j+2}-r_{2j+1}}\log\frac{1-r_{2j+1}}{1-r_{2j+2}}\right)\\
	&\asymp\sum_{j=n+1}^\infty\frac{\psi(2j+1)}{2^{(2j+1)\psi(2j+1)}}
	+\frac1{2^{(2n+1)\psi(2n+1)}}\log\frac{1-r}{1-r_{2n+1}}
  +\sum_{j=n}^\infty\frac1{2^{(2j+1)\psi(2j+1)}}\\
	&\asymp\frac{\psi(2n+3)}{2^{(2n+3)\psi(2n+3)}}
	+\frac1{2^{(2n+1)\psi(2n+1)}}\log\frac{1-r}{1-r_{2n+1}}
  +\frac1{2^{(2n+1)\psi(2n+1)}}\\
	&\asymp\frac1{2^{(2n+1)\psi(2n+1)}}\left(1+\log\frac{1-r}{1-r_{2n+1}}\right),	
	\end{split}
	\end{equation*}
and for $r_{2n+1}\le r\le r_{2n+2}$ we have
	\begin{equation*}
	\begin{split}
	\widehat{\widetilde{\om}}(r)
	&\asymp\int_{r}^{r_{2n+2}}\left(1-\frac{r_{2n+3}-r_{2n+2}}{1-s}\right)\,ds
	+\sum_{j=n+1}^\infty\int_{r_{2j}}^{r_{2j+1}}\frac{1-r_{2j+1}}{1-s}\,ds\\
	&\quad+\sum_{j=n+1}^\infty\int_{r_{2j+1}}^{r_{2j+2}}\left(1-\frac{r_{2j+3}-r_{2j+2}}{1-s}\right)\,ds\\
	&=(r_{2n+2}-r)-(r_{2n+3}-r_{2n+2})\log\frac{1-r}{1-r_{2n+2}}\\
	&\quad+\sum_{j=n+1}^\infty(1-r_{2j+1})\log\frac{1-r_{2j}}{1-r_{2j+1}}\\
	&\quad+\sum_{j=n+1}^\infty\left((r_{2j+2}-r_{2j+1})
	-(r_{2j+3}-r_{2j+2})\log\frac{1-r_{2j+1}}{1-r_{2j+2}}\right)\\
	&\asymp(r_{2n+2}-r)-(r_{2n+3}-r_{2n+2})\log\frac{1-r}{1-r_{2n+2}}
	+\frac{\psi(2n+3)}{2^{(2n+3)\psi(2n+3)}}.
	\end{split}
	\end{equation*}
It follows that
	$$
	\widehat{\widetilde{\om}}(r_{2n+2})\asymp\frac{\psi(2n+3)}{2^{(2n+3)\psi(2n+3)}},\quad n\in\N,
	$$
and 
	$$
	\widehat{\widetilde{\om}}(2r_{2n+2}-1)
	\asymp(1-r_{2n+2})\left(2-\log 2\right)+(1-r_{2n+3})\log2+\frac{\psi(2n+3)}{2^{(2n+3)\psi(2n+3)}},\quad n\to\infty.
	$$
Thus $\widehat{\widetilde{\om}}(r_{2n+2})/\widehat{\widetilde{\om}}(2r_{2n+2}-1)\to0$, as $n\to\infty$, and hence $\widetilde\om\not\in\DD$.

By the proof of \cite[Theorem~14]{PelaezRattya2019} we know that for a suitably taken $C_1>0$, the choice $\psi(x)=-\frac1{C_1}\log_2\vp\left(1-\frac1x\right)$ yields $\vp\in\DDD$ and $A^p\subset A^p_\om\subset A^p_{\vp}$ for all $0<p<\infty$. It remains to show that $\vp(r)=2^{-C_1\psi\left(\frac1{1-r}\right)}$ is regular, that is,
	$$
	\int_r^12^{-C_1\psi\left(\frac1{1-t}\right)}\,dt\asymp(1-r)2^{-C_1\psi(\frac1{1-r})},\quad 0\le r<1.
	$$
By a change of variable, this	is equivalent to 
	$$
	\int_x^\infty2^{-C_1\psi(y)}\frac{dy}{y^2}\asymp\frac{2^{-C_1\psi(x)}}{x},\quad 1\le x<\infty.
	$$
But since $\psi$ is increasing, we have
	\begin{equation*}
	\begin{split}
	\frac{2^{-C_1\psi(x)}}{x}
	&\ge\int_x^\infty2^{-C_1\psi(y)}\frac{dy}{y^2}
	\ge2^{-C_1\psi(2x)}\frac1{2x}
	\ge2^{-C_1\psi(x)}\frac1{2^{C_1C_2+1}x},\quad 1\le x<\infty,
	\end{split}
	\end{equation*}
for some constant $C_2=C_2(C)>0$, where $C$ is that of the hypothesis $\psi(x+1)-\psi(x)\le C/x$ for all $x\ge1$. Thus $\vp$ is regular, and the proof is complete.
\end{Prf}

\end{document}